 \newcommand{\beq}{\begin{equation}}
\newcommand{\eeq}{\end{equation}}
\newtheorem{theorem}{Theorem}[section]
\newtheorem{proposition}{Proposition}[section]
\newtheorem{lemma}[theorem]{Lemma}
\newtheorem{corollary}[theorem]{Corollary}
\newenvironment{proof}{\medbreak\noindent{\it Proof.}\rm}{\hfill$\square$\rm}
\newcommand{\PSH}{{\operatorname{PSH}}}
\newcommand{\MC}{{\operatorname{MC_0}}}
\newcommand{\CNV}{{\operatorname{CNV}}}
\newcommand{\D}{{\mathbb D}}
\newcommand{\Z}{{\mathbb Z}}
\newcommand{\Rp}{{\mathbb R}^p}
\newcommand{\Rn}{{\mathbb R}^n}
\newcommand{\Rnm}{{\mathbb R}_-^n}
\newcommand{\Rnp}{{\mathbb R}_+^n}
\newcommand{\C}{{\mathbb  C}}
\newcommand{\Cn}{{\mathbb  C\sp n}}
\newcommand{\F}{{\mathcal F}}
\newcommand{\cI}{{\mathcal  I}}
\newcommand{\cJ}{{\mathcal  J}}
\newcommand{\cO}{{\mathcal  O}}
\newcommand{\mi}{{\mathfrak{m}}}
\title{Multi-circled singularities, Lelong numbers, and integrability index}
\author{Alexander Rashkovskii}
\date{}
\begin{document}

\maketitle

\begin{abstract} By comparing Green functions of multi-circled plurisubharmonic singularities $u$ in $\Cn$ to their indicators, we prove formulas for higher Lelong numbers $L_k(u)$ and integrability index $\lambda_u$ (the latter one being due to Kiselman) and extend Howald's result on multiplier ideals for monomial ideals to multi-circled singularities. This also leads to an elementary proof of the relations $\lambda_u\le k^{-1}L_k(u)^{1/k}$, $1\le k\le l$, for the multi-circled singularities, where $l$ is the codimension of the set $u^{-1}(-\infty)$. For $k=1$ and arbitrary plurisubharmonic function $u$ the inequality is due to Skoda, and for $k=n$ and any plurisubharmonic $u$ with isolated singularity the relation is due to Demailly. We also describe all multi-circled functions for which the inequalities are equalities.

 We also prove these inequalities, by a reduction to Demailly's result, in the general case of (not necessarily multi-circled) plurisubharmonic functions. In addition, we get a description of all plurisubharmonic singularities $u$ whose integrability index is given by the lower bound in Skoda's inequality, i.e., $\lambda_u= n^{-1}L_1(u)$.

\medskip
{\sl Subject classification}: 32U05, 32U25, 32U35, 13H15.
\end{abstract}

\section{Introduction and results}

Recall that an upper semicontinuous function $u$ on a complex manifold $M$
is called {\it plurisubharmonic} ({\it psh}) if for every holomorphic mapping $\xi$
from the unit disk into $M$, the function $u\circ\xi$ is subharmonic. A basic example is
$u=c\log|f|$ with $c>0$ and a function $f$ holomorphic on $M$ -- or, more generally, a holomorphic mapping $f:M\to{\C}^p$ ($|\cdot|$ means the Euclidean norm.)

We restrict ourselves to local considerations, so in the sequel we deal with functions defined near
$0\in{\C}^n$. Let ${\mathcal O}_0$ denote the ring of germs of analytic functions $f$
at $0$, and let $\mi=\{f\in {\cal O}_0 :
f(0)=0\}$ be its maximal ideal. The log-transformation
$f\mapsto\log|f|$ maps ${\mathcal O}_0$ into the collection $\PSH_0$ of germs of
psh functions. We will say that a psh germ $u$ is {\it
singular} at $0$ if $u$ is not bounded in any neighborhood of
$0$. For functions $u=\log|f|$, $f\in {\cal O}_0 $, this means $f\in\mi$.
Asymptotic behavior of arbitrary psh functions can be much more
complicated; nevertheless, all their standard "rough" characteristics can be viewed as extensions of the corresponding notions for analytic functions.

\medskip

A basic characteristic of singularity of $u\in
\PSH_0$ is its {\it Lelong number}
$$ \nu_u=
\liminf_{z\to 0}\frac{u(z)}{\log|z|}.$$
If $f\in\mi$, then $\nu_{\log|f|}=m_f$, the multiplicity (vanishing order) of $f$ at $0$. If ${\cI}=\langle f_1,\ldots,f_p\rangle$ is the ideal in ${\cal O}_0$ generated by $f_1,\ldots,f_p$, then $$m_{\cI}=\min\{m_f:\: f\in {\cI}\}=\nu_{\max_k\log|f_k|}=\nu_{\log|f|}.$$

Let $u\in\PSH_0$. Assuming $(dd^c u)^k$
well-defined for all $k\le l$, denote
\begin{equation}\label{eq:Lk} L_k(u)=\nu((dd^c u)^k,0)=(dd^c u)^k\wedge(dd^c\log|z|)^{n-k}(0),\end{equation}
the {\it higher order Lelong numbers} of $u$, that is, the Lelong numbers of the currents $(dd^c u)^k$, at $0$. Here  $d=\partial + \bar\partial$, $d^c= (
\partial -\bar\partial)/2\pi i$, so $(dd^c\log|z|)^{n}$ charges $0$ with the unit mass.

In particular, $L_1(u)=\nu_u$ for any $u\in\PSH_0$, and if $(dd^c u)^n$ is well defined near $0$, then $L_n$ is
the Monge-Amp\`ere mass of $u$ at $0$: $L_n(u)=(dd^c u)^n(0)$. If $u=\log|f|$ for a holomorphic mapping $f$ whose zero set has codimension $l$ at $0$, then $L_l(u)$ equals the multiplicity of the mapping $f$ at $0$.
For an $\mi$-primary (zero-dimensional) ideal $\cI$ generated by $f_1,\ldots,f_p$, its Samuel multiplicity equals $L_n(\log|f|)$, and the Lelong numbers $L_k(\log|f|)$ are the mixed multiplicities of $n$-tuples of ideals consisting of $k$ copies of $\cI$ and $n-k$ copies of $\mi$.

\medskip
One more characteristic, introduced in various contexts by several authors (first, probably, in \cite{Sk}) and attracted recently considerable attention (e.g., \cite{ACKPZ}, \cite{Be}, \cite{D8}, \cite{DK}, \cite{FaJ2}), \cite{Gu}, \cite{Kis3}), is the {\it integrability index (at $0$)}
\begin{equation}\label{ii} \lambda_u=
\inf\{\lambda>0:e^{-u/\lambda}\in L^2_{loc}(0)\}.\end{equation} If
$f=(f_1,\ldots,f_p)\in\mi^p$, the value
$\lambda_{\log|f|}$ is known as the {\it Arnold multiplicity} of the
ideal ${\cI}$ generated by $f_j$, and its inverse
\begin{equation}\label{lc}
lc({\mathcal I})=\lambda_{\log|f|}^{-1}\end{equation} is the {\it
log canonical threshold} of ${\cI}$. The value of $lc(\cI)$ can be "computed" by means of log resolution of $\cI$ (see, e.g., \cite{K}).

\medskip
A classical result due to Skoda \cite{Sk} states that:
\begin{equation}\label{skoda}
n^{-1}\nu_u\le \lambda_u\le \nu_u,
\end{equation}
the extremal situation being realized, for example, for
$u=\log|z|$ (for the first inequality) and $u=\log|z_1|$ (for the second one).

Recently, in  \cite{dFEM} and \cite{M}, the log canonical threshold of a zero dimensional ideal ${\mathcal I}$ was related to its Samuel multiplicity $e({\mathcal I})$:
\begin{equation}\label{dFEM} lc({\cI})\ge\frac{n}{e(\cI)^{1/n}},\end{equation}
with an equality if and only if the integral closure of $\cI$ is a power of the maximal ideal.
It was used in \cite{D8} for a corresponding bound for psh functions $u$ with isolated singularity at $0$, and extended then in \cite{Ze} to all $u$ with $(dd^cu)^n$ well defined,
\begin{equation}\label{dem} \lambda_u\le n^{-1}\,{L_n(u)^{1/n}}.\end{equation}
A direct proof of Demailly's inequality (\ref{dem}) without using (\ref{dFEM}) was obtained then in \cite{ACKPZ}. Notice that (\ref{dem}) implies (\ref{dFEM}) by setting $u=\log|f|$ with $f=(f_1,\ldots,f_p)$ generators of $\cI$. To the best of our knowledge, the question of equality in (\ref{dem}) is open.
Observe also that none of the bounds (\ref{skoda}) and (\ref{dem}) implies the other one.

\medskip

While computation of the standard Lelong number is quite easy, this is not the case for the higher Lelong numbers and the integrability index. It is thus desirable to simplify this, say, by replacing the function with another one whose asymptotic behavior is easier to handle. For example, given a function $u\in\PSH^-(D)$ (which means that it is negative and plurisubharmonic in a domain $D$) with isolated singularity at $0$, its residual Monge-Amp\`ere mass at $0$ can be shown to coincide with that of $g_u$, the regularized upper envelope of all functions $v\in\PSH^-(D)$ such that $v\le u+O(1)$ \cite{R7}. The function $g_u$ (the {\it greenification} of $u$) satisfies $(dd^cg_u)^n=0$ outside $0$, so its total mass in $D$ coincides with that at $0$.

This function keeps also the higher Lelong numbers and the integrating factor, see section~2. Observe however that we do not know if the relation $L_k(u)=L_k(g_u)$ holds true for all $u$ such that $(dd^cu)^k$ is well defined.

We consider then {\it multi-circled} psh singularities
$u(z)=u(|z_1|,\ldots, |z_n|)$; their collection will be denoted by $\MC$. They can be considered as a plurisubharmonic counterpart for the notion of {\it monomial ideals} (those that can be generated by monomials). On the other hand, as was shown in \cite{R7}, the function $\log|F|$ has, up to a bounded term, multi-circled singularity for generic holomorphic mappings $F$. In Section~3.1 we show that the greenification $g_u$ of $u\in\MC$ in the unit polydisk $\D^n$  coincides with the {\it (local) indicator} $\Psi_u$ introduced in \cite{LeR}. As a consequence, $L_k(u)=L_k(\Psi_u)$ (whenever defined) and $\lambda_u=\lambda_{\Psi_u}$. This gives relatively easy computations of these characteristics for the indicators: for higher Lelong numbers, by means of mixed Minkowski's (co)volumes, and for the integrability index -- by means of the directional Kiselman-Lelong numbers, see Sections 3.2 and 3.3. The latter one repeats a result of Kiselman \cite{Kis3}) and is used in Section 3.4 for a description of the multiplier ideals for multi-circled singularities in the spirit of Howald's result for monomial ideals \cite{Ho}; a similar extension was made, as we recently learned, in \cite{Gu}.

Finally, we obtain relations between the Lelong numbers $L_k(u)$ and the integrability index $\lambda_u$, that fill the gap between Skoda's (\ref{skoda}) and Demailly's (\ref{dem}) inequalities,
\beq\label{eq:rel} \lambda_u\le k^{-1}L_k(u)^{1/k},\quad 1\le k\le l,\eeq
where $l$ is the codimension of an analytic set $A$ such that $u^{-1}(-\infty)\subset A$. In Section 3.5 we do that for $u\in\MC$ as a consequence of the corresponding relations for the indicator $\Psi_u$; in addition, we describe all $u\in\MC$ with equality in (\ref{eq:rel}) for some $k\le n$: these are those whose indicators have the form
 $\Psi_u(z)=B\max_{j\in J}\log|z_{j}|$ with $B\ge 0$ and $J=(j_1,\ldots,j_k)$.

 In Section~4, relations (\ref{eq:rel}) are proved for the general (not necessarily multi-circled) case, by reduction to Demailly's inequality (\ref{dem}). As a consequence, we get bounds for the log canonical threshold of an ideal in terms of mixed Samuel multiplicities. In addition, we describe all plurisubharmonic singularities $u$ whose integrability index is given by the lower bound in Skoda's inequalities (\ref{skoda}), i.e., $\lambda_u= n^{-1}\nu_u$.

\section{Green and Green-like functions}

Let $D\subset\Cn$ be a bounded hyperconvex neighborhood of $0$, and let $u\in\PSH^-(D)$, $u(0)=-\infty$.
Consider the class
$\F_{u,D}$ of negative psh functions $v$ in $D$ such that $v(z)\le u(z)+O(1)$ near $0$,
then the regularization of its upper envelope,
 \beq\label{eq:cgf} g_u(z)=g_{u,D}(z) = \limsup_{y\to
z}\sup\{v(y):\: v\in \F_{u,D}\},\eeq
is a psh function in $D$, maximal on $D\setminus \{g_u=-\infty\}$. It is called the {\it
complete greenification} of $u$ at $0$ in $D$ \cite{R7}.

If $u$ is locally bounded and maximal on a punctured neighborhood of $0$, then $g_{u}=G_{u}$, the Green--Zahariuta function for $u$. In this case we have
\beq\label{eq:ugu} u= g_u+O(1)\quad {\rm near\ }u^{-1}(-\infty).\eeq
Relation (\ref{eq:ugu}) holds true if $u=\log|f|+O(1)$ for a holomorphic mapping $f: {\mathbb C}^n_0\to{\mathbb C}^p_0$, even without the maximality and isolated singularity assumptions; in this situation, $g_u=G_\cI$, the {\it pluricomplex Green function for the ideal} $\cI$ generated by the components of the mapping $f$, see \cite{RSig2}.

In the general case of non-maximal singularity, even isolated, relation (\ref{eq:ugu}) is no longer true; nevertheless, this does not affect the Lelong numbers $L_k$ (\ref{eq:Lk}).

\begin{proposition}\label{theo:Lngu} If $u$ has isolated singularity at $0$, then
$$ L_k(u)=L_k(g_u),\quad 1\le k\le n.$$
\end{proposition}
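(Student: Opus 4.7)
The plan is to leverage the Perron--Bremermann envelope structure of $g_u$ together with the fact $u\in\F_{u,D}$. Since $u\le g_u$, one has the trivial comparison $L_1(u)\ge L_1(g_u)$, but higher-order Lelong numbers of Monge--Amp\`ere currents are not monotone under pointwise comparison of psh functions, so proving the equality $L_k(u)=L_k(g_u)$ for all $1\le k\le n$ requires more than the simple pointwise inequality.

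First I would establish that $g_u$ has isolated singularity at $0$: every $v\in\F_{u,D}$ satisfies $v\le u+O(1)$ near $0$, so $v^{-1}(-\infty)\subset u^{-1}(-\infty)=\{0\}$; this property is inherited by the upper regularized supremum, giving $g_u^{-1}(-\infty)\subset\{0\}$, and $u\le g_u$ forces equality. Hence the currents $(dd^c g_u)^k$ are well-defined by Bedford--Taylor theory, and the envelope construction makes $g_u$ maximal on $D\setminus\{0\}$, so $(dd^c g_u)^n$ is the point mass $L_n(g_u)\,\delta_0$.

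For the equality of the Lelong numbers, I would combine a Choquet approximation with Bedford--Taylor convergence. By Choquet's lemma there is an increasing sequence $v_j\in\F_{u,D}$ with $(\sup_j v_j)^*=g_u$; each $v_j$ obeys $v_j\le u+C_j$ on a neighborhood $U_j\ni 0$. The shifted functions $\tilde v_j=v_j-C_j$ are psh with isolated singularity at $0$, satisfy $\tilde v_j\le u$ on $U_j$, and have $L_k(\tilde v_j)=L_k(v_j)$ since Lelong numbers are invariant under additive constants. Combining the pointwise inequality $\tilde v_j\le u$ with the fact that $v_j\in\F_{u,D}$ forces $\tilde v_j$ and $u$ to have the same ``singularity type'' on $U_j$ up to $O(1)$, Demailly's invariance of $L_k$ under $O(1)$-perturbation yields $L_k(v_j)=L_k(u)$ for every $j$. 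Passing to the limit $j\to\infty$ via Bedford--Taylor's continuity of Monge--Amp\`ere operators along increasing sequences of psh functions locally bounded on $D\setminus\{0\}$ then gives $L_k(g_u)=\lim_j L_k(v_j)=L_k(u)$.

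The main obstacle is the passage to the limit for the point masses. In general, Lelong numbers of currents are only upper semicontinuous under weak convergence, so one cannot simply take $\lim L_k(v_j)=L_k(g_u)$ from weak convergence of $(dd^c v_j)^k\to(dd^c g_u)^k$ alone. The monotone character of the sequence $v_j\nearrow g_u$ together with the common isolated singularity at $0$ and the maximality of $g_u$ on $D\setminus\{0\}$ (which concentrates the limiting mass at the singular point) are what make the limiting identity go through. A secondary subtle point is the identity $L_k(v_j)=L_k(u)$: although the global $O(1)$-equivalence $u=g_u+O(1)$ is exactly the relation (\ref{eq:ugu}) that may fail, each individual approximant $v_j$ coincides with $u$ up to $O(1)$ on its own neighborhood $U_j$, which is precisely the local hypothesis needed for Demailly's invariance to apply.
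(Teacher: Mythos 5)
There is a genuine gap in the key step where you assert $L_k(v_j)=L_k(u)$ for every $j$. The membership $v_j\in\F_{u,D}$ only gives the one-sided bound $v_j\le u+O(1)$ near $0$; it does \emph{not} force the reverse inequality $v_j\ge u+O(1)$, so $v_j$ and $u$ need not have the same singularity type, and the ``$O(1)$-perturbation'' reasoning does not apply. A concrete counterexample: take $u=\log|z|$ in $\D^n$ and $v=2\log|z|\in\F_{u,\D^n}$; then $L_n(v)=2^n L_n(u)$. Relatedly, your opening remark that higher Lelong numbers are not monotone under pointwise comparison is incorrect: Demailly's comparison theorem does give monotonicity of all $L_k$, $1\le k\le n$, for psh functions with isolated singularity, and recognizing this is precisely what closes the argument.

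The fix is a sandwich rather than an exact equality on each approximant. From $v_j\le u+O(1)$ and the comparison theorem you get $L_k(v_j)\ge L_k(u)$ for all $j$ and all $k$; from $u\le g_u$ (same comparison, now in the other direction) you get $L_k(u)\ge L_k(g_u)$; and the weak convergence $(dd^c v_j)^k\to(dd^c g_u)^k$ (Xing's theorem, since the $v_j$ increase to $g_u$ with isolated singularity) gives, by Demailly's semicontinuity of Lelong numbers, $\limsup_j L_k(v_j)\le L_k(g_u)$. Chaining these, $L_k(g_u)\le L_k(u)\le\limsup_j L_k(v_j)\le L_k(g_u)$, so all are equal. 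This is exactly the paper's argument, and it avoids both the unjustified identity $L_k(v_j)=L_k(u)$ and the appeal to a limit $\lim_j L_k(v_j)=L_k(g_u)$, which, as you correctly note, does not follow from weak convergence alone.
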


\begin{proof} By the Choquet lemma, there exists a sequence $u_j\in\F_{u,D}$ increasing a.e. to $g_u$; obviously they can be chosen to have isolated singularity at $0$. Therefore, the currents $(dd^cu_j)^k$ converge to $(dd^cg_u)^k$, see \cite[Thm. 5]{X} (the statement of that theorem is on the convergence of $(dd^cu_j)^n$ only, while the proof uses induction in the degree $k$).

By Demailly's Semicontinuity theorem for Lelong numbers \cite{D1}, this implies $$\limsup_{j\to\infty} L_k(u_j)\le L_k(g_u).$$ The relations $u_j\le u+O(1)\le g_u$ give, by Demailly's Comparison theorem \cite{D1}, $L_k(u)\le \limsup L_k(u_j)$ and $L_k(g_u)\le L_k(u)$.
\end{proof}

\medskip

{\it Remarks.} 1. For $k=n$, this was proved in  \cite[Thm. 1]{R4}.

\medskip
2. We do not know if $L_k(u)=L_k(g_u)$ for any $u$ with well-defined $(dd^cu)^k$.

\medskip

The greenification keeps also the value of of the integrability index, without any isolated singularity assumption.

\begin{proposition}\label{theo:iind} For any $u\in\PSH_0$, $\lambda_u=\lambda_{g_u}$.
\end{proposition}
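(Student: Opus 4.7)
The plan is to establish the two inequalities $\lambda_{g_u}\le\lambda_u$ and $\lambda_u\le\lambda_{g_u}$ separately, following the Choquet-plus-semicontinuity pattern already used in the proof of Proposition~\ref{theo:Lngu}.

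The first inequality is essentially immediate: since $u$ itself trivially lies in $\F_{u,D}$ (take the constant $0$), one has $g_u\ge u$ throughout $D$. Multiplying by $-1/\lambda$ and exponentiating gives $e^{-u/\lambda}\ge e^{-g_u/\lambda}$ pointwise, so every $\lambda$ for which $e^{-u/\lambda}\in L^2_{loc}(0)$ also satisfies $e^{-g_u/\lambda}\in L^2_{loc}(0)$; hence $\lambda_{g_u}\le\lambda_u$.

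For the reverse inequality I would first invoke Choquet's lemma to extract a sequence $v_j\in\F_{u,D}$ increasing almost everywhere to $g_u$. By the defining condition of $\F_{u,D}$, each $v_j$ satisfies $v_j\le u+C_j$ on some neighborhood of $0$, and the elementary bound $e^{-2u/\lambda}\le e^{2C_j/\lambda}e^{-2v_j/\lambda}$ immediately gives $\lambda_u\le\lambda_{v_j}$ for every $j$. It therefore suffices to establish $\limsup_j\lambda_{v_j}\le\lambda_{g_u}$.

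The main obstacle is this last step. One cannot simply plug in $v=g_u$, because $g_u$ need not lie in $\F_{u,D}$: the difference $g_u-u$ may well be unbounded near $0$ (consider $u=-\sqrt{-\log|z|}$, whose greenification is $\equiv 0$). The natural remedy is to apply the Demailly--Koll\'ar semicontinuity theorem for complex singularity exponents (equivalently, for multiplier ideals; cf.\ \cite{DK}) to the $L^1_{loc}$-convergent sequence $v_j\to g_u$ — the $L^1_{loc}$ convergence being a routine consequence of monotone convergence since $v_1\le v_j\le g_u$. That theorem guarantees, for every $\lambda>\lambda_{g_u}$, that the integrability $e^{-2g_u/\lambda}\in L^1_{loc}(0)$ propagates to $e^{-2v_j/\lambda}\in L^1_{loc}(0)$ for all $j$ large enough, hence $\lambda_{v_j}\le\lambda$ eventually. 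Letting $\lambda\downarrow\lambda_{g_u}$ then yields $\limsup_j\lambda_{v_j}\le\lambda_{g_u}$, which combined with $\lambda_u\le\lambda_{v_j}$ closes the argument.
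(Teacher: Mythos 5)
Your proof is correct and takes essentially the same route as the paper, which simply invokes the Demailly--Koll\'ar semicontinuity theorem applied to a Choquet sequence $u_j\in\F_{u,D}$ increasing a.e.\ to $g_u$. You have merely made explicit the two inequalities that the paper leaves implicit: the easy bound $\lambda_{g_u}\le\lambda_u$ coming from $u\in\F_{u,D}$ (so $g_u\ge u$), and the reverse bound $\lambda_u\le\lambda_{v_j}$ followed by $\limsup_j\lambda_{v_j}\le\lambda_{g_u}$ from semicontinuity under $L^1_{loc}$ convergence.
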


\begin{proof} This follows from the Semicontinuity theorem for the integrability index \cite{DK}, applied to a sequence $u_j\in\F_{u,D}$ increasing a.e. to $g_u$.
\end{proof}

\section{Multi-circled singularities}

In this section we work with {\it multi-circled singularities} $u\in\MC$, that is, germs $u\in\PSH_0$ such that $u(z)=u(|z_1|,\ldots, |z_n|)$. The function $$\widehat u(t)=u(e^{t_1},\ldots, e^{t_n})$$ is then convex on the set $\{t\in\Rn: \: t_j\le -N,\ 1\le j\le n\}$ for some $N>0$ and increasing in each of the variables $t_j$; we will call it the {\it convex image} of $u$.

Denote ${\bf 1} =(1,\ldots,1)$. Since the function $\widehat u(t-2N{\bf 1})-\widehat u(t)$ is decreasing in each $t_j$, one has
$\widehat u(t)+ C\le \widehat u(t-2N{\bf 1})\le \widehat u(t)$ for all $t$, so we can always assume $\widehat u$ defined and negative on $\Rnm=\{t\in\Rn: \: t_j<0,\ 1\le j\le n\}$ and, accordingly, $u$ defined (and negative) on the unit polydisk $\D^n$.

The collection of all negative convex functions on $\Rnm$, increasing in each variable, will be denoted by $\CNV_-$.

\subsection{Indicators and greenifications}\label{ssec:ind}

We fix the domain $D$ to be the unit polydisk $\D^n$. Given $u\in\PSH^-(\D^n)$, we would like to compare the corresponding greenification
$g_u$ with the {\it indicator} of $u$ defined in \cite{LeR} as follows.

 Let $\nu_u(a)$ denote the Lelong-Kiselman number of $u$ in the direction $a\in\Rnp=\{a\in\Rn:\: a_j>0,\ 1\le j\le n\}$, that is,
$$\nu_u(a)=\liminf_{x\to0}\frac{u(x)}{\phi_a(x)},$$
where
\beq \label{eq:phia} \phi_a(x)=\max_k a_k^{-1}\log|x_k|.\eeq
Then the function
\beq\label{eq:defind} \Psi_u(x)=\nu_u(-\log|x_1|,\ldots,-\log|x_n|)\eeq
extends from $\D_*^n=\D^n\setminus \{x_1\cdot\ldots\cdot x_n\neq 0\}$ to a function plurisubharmonic in $\D^n$, the {\it indicator of} $u$. Its convex image $\widehat\Psi_u(t)=\Psi_u(e^{t_1},\ldots, e^{t_n})$ is a convex function on $\Rnm$, satisfying $\widehat\Psi_u(c\,t)=c\,\widehat\Psi_u(t)$ for any $c>0$. In particular, it is the restriction to $\Rnm$ of the characteristic function of a closed convex set $\Gamma_u\subset{\overline\Rnp}$,
\beq\label{eq:charf} \widehat\Psi_u(t)=\sup_{a\in\Gamma_u}\langle a,t\rangle,\eeq
with the property $\Gamma_u +\Rnp\subset\Gamma_u$, so
 \beq\label{eq:charf1} \Gamma_u=\{a\in{\overline\Rnp}:\: \langle a,t\rangle\le\widehat\Psi_u(t)\quad \forall t\in\Rnm.\}\eeq
 We will call $\Gamma_u$ the {\it indicator diagram} of $u$. When $u=\log|f|$ for $f\in\cO_0$, $\Gamma_u$ is the Newton polyhedron of $f$ at $0$ in the sense of Kushnirenko.

Every function $u\in\PSH^-(\D)$ satisfies the relation $u\le\Psi_u$ \cite{LeR}; moreover, it dominates every function $v$ participating in the definition of the $g_u$, so
\beq\label{eq:indbound} g_u\le\Psi_u.\eeq

In the case of multi-circled singularities, the indicators can be defined the other way round. Namely, convexity of $\widehat u$ implies that the ratio
\beq\label{eq:ratio} h_u(c)=\frac{\widehat u(ct)-\widehat u(t)}{c-1}\eeq
is increasing in $c$ for any $t\in\Rnm$, so
we introduce first the function $\widehat\Psi_u$ as the limit of $h_u(c)$ as $c\to\infty$ or, equivalently,
\beq\label{eq:indmc} \widehat\Psi_u(t)=\lim_{c\to\infty} c^{-1}\,\widehat u(ct), \quad t\in\Rnm,\eeq
then we set $\Psi_u(x)=\widehat\Psi_u(\log|x_1|,\ldots,\log|x_n|)$ on $\D_*^n$ and extend it to the whole $\D^n$.

Note also that the greenification $g_u$ of $u\in\MC$ can be also defined equivalently by considering first the envelope
\beq\label{eq:convgr} g_{\hat u}=\sup \{\widehat v\in \widehat\F_u\}, \eeq
where $$\widehat\F_{ u}=\{\widehat v\in \CNV_-:\: \widehat v(t)\le\widehat u(t)+O(1){\rm\ as\ }|t|\to\infty\},$$
and then writing it down as the convex image of $g_u$, i.e., $g_{\hat u}=\widehat g_u$.

\begin{theorem}\label{prop:Grind} $g_u=\Psi_u$ for any $u\in\MC$.
\end{theorem}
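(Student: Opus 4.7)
The inequality $g_u\le\Psi_u$ is given by (\ref{eq:indbound}), so the task reduces to proving $g_u\ge\Psi_u$ on $\D^n$, which I will establish in convex-image coordinates as $\widehat g_u\ge\widehat\Psi_u$ on $\Rnm$. The plan is to use the representation $\widehat\Psi_u(t)=\sup_{a\in\Gamma_u}\langle a,t\rangle$ from (\ref{eq:charf}) and exhibit, for each $a$ in the interior of the indicator diagram $\Gamma_u$, a competitor $v_a(z):=\sum_j a_j\log|z_j|$ lying in the class $\F_{u,\D^n}$; this $v_a$ is plurisubharmonic and non-positive on $\D^n$, and its convex image is the linear function $t\mapsto\langle a,t\rangle$.

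The main lemma I would prove is: for every $a\in\mathrm{int}(\Gamma_u)$ (interior taken in $\Rn$) there is a constant $C_a$ with $\langle a,t\rangle\le\widehat u(t)+C_a$ on $\Rnm$. Granted this, $v_a\le u+C_a$ near $0$, so $v_a\in\F_{u,\D^n}$ and hence $v_a\le g_u$, i.e., $\langle a,t\rangle\le\widehat g_u(t)$ for every such $a$. The set $\Gamma_u$ is a closed convex subset of $\overline{\Rnp}$ with non-empty interior (any $M\mathbf{1}$ with $M$ large enough lies in $\mathrm{int}(\Gamma_u)$, which follows easily from boundedness of $\widehat\Psi_u$ on the compact set $\{|t|=1\}\cap\overline{\Rnm}$), so $\mathrm{int}(\Gamma_u)$ is dense in $\Gamma_u$. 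Taking the supremum yields $\widehat g_u\ge\sup_{a\in\Gamma_u}\langle a,\cdot\rangle=\widehat\Psi_u$, as desired.

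To prove the lemma, the assumption $a\in\mathrm{int}(\Gamma_u)$ yields $a+\delta B_1\subset\Gamma_u$ for some $\delta>0$; taking the supremum over unit vectors $b$ in the inclusion $\langle a+\delta b,t\rangle\le\widehat\Psi_u(t)$ produces the sharper bound $\langle a,t\rangle+\delta|t|\le\widehat\Psi_u(t)$ on $\Rnm$. Now consider the convex function $h(t):=\widehat u(t)-\langle a,t\rangle$. By the defining formula (\ref{eq:indmc}) for the indicator, $h(ct)/c=\widehat u(ct)/c-\langle a,t\rangle\to\widehat\Psi_u(t)-\langle a,t\rangle\ge\delta|t|$ as $c\to\infty$, so $h(ct)\to+\infty$ along every ray through $0$ in $\Rnm$. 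Combined with convexity of $h$, this forces $h$ to be bounded below on $\Rnm$, i.e., $\widehat u(t)\ge\langle a,t\rangle-C_a$, which is the lemma. The technical point I expect to require the most care is the last step: passing from the ray-wise limit $h(ct)\to+\infty$ to a uniform lower bound on $h$, in particular near the coordinate hyperplanes $t_j=0$, where $\widehat u$ may fail to be finite when $u$ has non-isolated singularities. A clean way to sidestep this is to invoke the classical identity $\mathrm{cl}\,\mathrm{dom}(\widehat u^*)=\mathrm{dom}(\widehat\Psi_u^*)=\Gamma_u$ for Legendre transforms of convex functions with recession $\widehat\Psi_u$, which gives $\mathrm{int}(\Gamma_u)\subset\mathrm{dom}(\widehat u^*)$ at once.
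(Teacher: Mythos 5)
Your proof is correct, but it follows a genuinely different route from the paper's. The paper keeps $\widehat u$ itself as the building block: it scales it, $T_c\widehat u(t)=c^{-1}\widehat u(ct)$, notes that $T_c\widehat u\to\widehat\Psi_u$ locally uniformly on the simplex $\Sigma=\{t\in\overline\Rnm:\sum t_j=-1\}$ by the monotonicity of the quotients (\ref{eq:ratio}), and then shows that a slightly tilted version $T_c\widehat u(t)+\epsilon\sum_jt_j$ lies in $\widehat\F_u$, hence below $g_{\hat u}$; letting $\epsilon\to0$ and $c\to\infty$ finishes. You instead pass entirely to affine competitors $\langle a,\cdot\rangle-C_a$ for $a$ in $\mathrm{int}\,\Gamma_u$, which in the $z$-variables are the monomial weights $\sum_ja_j\log|z_j|$, and reduce the whole matter to the inclusion $\mathrm{int}\,\Gamma_u\subset\mathrm{dom}(\widehat u^*)$. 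As you yourself flag, the "ray-wise" part of your argument (that $h(ct)\to+\infty$ along each ray forces $h=\widehat u-\langle a,\cdot\rangle$ to be bounded below) is not a complete argument: near $\partial\Rnm$, where $\widehat u$ may tend to $-\infty$ while $\langle a,\cdot\rangle$ stays bounded, one cannot conclude boundedness below from convexity and radial divergence alone. Your proposed fix — the Rockafellar identity that the recession function of $\widehat u$ is the support function of $\mathrm{cl}\,\mathrm{dom}(\widehat u^*)$, combined with $\mathrm{dom}(\widehat\Psi_u^*)=\Gamma_u$ and the fact that for convex sets $\mathrm{int}(\mathrm{cl}\,C)=\mathrm{int}(C)$ — does close that gap; together with the density of $\mathrm{int}\,\Gamma_u$ in $\Gamma_u$ (which holds since $\Gamma_u+\Rnp\subset\Gamma_u$ gives $\mathrm{int}\,\Gamma_u\ne\emptyset$) it yields $\widehat g_u\ge\widehat\Psi_u$. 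So the proposal is sound provided the Legendre-duality step is invoked as a theorem rather than as a "sidestep." Compared to the paper's argument, yours is more conceptual — it makes transparent which extremal functions in $\F_{u,\D^n}$ produce the indicator, namely the monomial weights dual to the Newton diagram — while the paper's scaling argument is more elementary and self-contained, needing only the monotonicity of (\ref{eq:ratio}) and a compactness/$\epsilon$ argument on $\Sigma$ rather than a quotable theorem from convex duality.
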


\begin{proof} In view of (\ref{eq:indbound}), we need to prove only the inequality $g_u\ge\Psi_u$ or, actually, $g_{\hat u}\ge\widehat\Psi_u$.

For any $c>0$, denote $T_c\widehat u(t)=c^{-1}\widehat u(ct)$.
The function $T_c{\hat u}$ belongs to the class $\CNV_-$ and has the same indicator $\widehat \Psi_u$, however it need not lie in the class $\widehat \F_{ u}$.

 Let $s>0$. Since both $T_s\widehat u$ and $\widehat\Psi_u$ are continuous functions on the simplex $\Sigma=\{\theta\in\overline\Rnm:\: \sum\theta_j=-1\}$, the mentioned monotonicity of the ratios (\ref{eq:ratio}) implies uniform convergence of $\widehat T_su$ to $\widehat\Psi_u$ on $\Sigma$ as $s\to\infty$. Therefore, for any $\epsilon>0$ one can find $s_0>0$ such that $|T_su-\widehat \Psi_u|\le \epsilon/2$ on $\Sigma$ for all $s>s_1$. Similarly, we get the relation  $|T_s(T_c\widehat u)-\widehat \Psi_u|\le \epsilon/2$ on $\Sigma$ for all $s>s_c$ and so,
 $ T_c\widehat u(t)\le\widehat u(t)+\epsilon|t|$ for all $t\in\Rnm$ with $|t|\ge \max\{s_1, s_c\}$, which implies
 $$ T_c\widehat u(t)+\epsilon\sum_jt_j\le\widehat u(t),\quad |t|\ge \max\{s_1, s_c\}.$$
 Therefore, the function $ T_c\widehat u(t)+\epsilon\sum_jt_j$ belongs to the class $\widehat \F_u$, which gives us 
 $ T_c\widehat u(t)+\epsilon\sum_jt_j\le g_{\hat u}$. Since $\epsilon$ is arbitrary, relation (\ref{eq:indmc}) completes the proof.
\end{proof}

\subsection{Monge-Amp\`ere masses}

Given $k\le n$, the Monge-Amp\`ere operator $(dd^cu)^k$ is well defined on plurisubharmonic functions  $u$ whose unbounded locus $UL(u)$ has codimension at least $ k$ \cite{D1}. For $u\in\MC$ this means
\beq \label{eq:codiml} UL(u)=\{z:\:z_{i_j}=0, \ 1\le j\le l\},\quad l\ge k. \eeq
For such a function, the $k$-th Lelong number $L_k$ is defined by (\ref{eq:Lk}).

\begin{theorem}\label{thm:MAn}
Let $u\in\MC$ satisfy (\ref{eq:codiml}), then
\begin{equation}\label{eq: MAm}L_k(u)= L_k(\Psi_u),\quad 1\le k\le l.\end{equation}
\end{theorem}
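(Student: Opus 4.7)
My approach is to use Theorem~\ref{prop:Grind} to replace $\Psi_u$ by the greenification $g_u$, so that $L_k(\Psi_u) = L_k(g_u)$ whenever the currents are defined, and the claim reduces to proving $L_k(u) = L_k(g_u)$ for $1 \le k \le l$. This is the conclusion of Proposition~\ref{theo:Lngu} with its isolated-singularity hypothesis dropped; the plan is to show that the proof of that proposition carries over to the present multi-circled setting with essentially one modification.

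The Choquet sequence $v_j \in \F_{u,\D^n}$ with $v_j \nearrow g_u$ a.e., used in the proof of Proposition~\ref{theo:Lngu}, need no longer consist of functions with isolated singularity at $0$. I would replace it by $u_j := v_j + j^{-1} \log|z|$: this function still belongs to $\F_{u,\D^n}$ (the added term is nonpositive), still forms an increasing sequence (since $\log|z| \le 0$ and $1/j \searrow 0$, the added term is nondecreasing in $j$), still converges a.e.\ to $g_u$, and now has isolated singularity at $0$ thanks to the extra logarithm; hence $(dd^c u_j)^m$ is well defined for every $m \le n$.

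With such $u_j$ in hand, the remainder of the argument follows the pattern of Proposition~\ref{theo:Lngu}. Because $\Psi_u$ has the same unbounded locus as $u$ --- its convex image $\widehat\Psi_u$ drives the singularity in the same set of variables as $\hat u$ --- the current $(dd^c g_u)^k = (dd^c \Psi_u)^k$ is well defined for $k \le l$. Xing's result \cite{X}, applied inductively in the degree $k$ exactly as in the proof of Proposition~\ref{theo:Lngu}, then yields the weak convergence $(dd^c u_j)^k \to (dd^c g_u)^k$. Demailly's semicontinuity theorem gives $\limsup_j L_k(u_j) \le L_k(g_u)$, while $u_j \le u + O(1)$ combined with Demailly's comparison theorem gives $L_k(u_j) \ge L_k(u)$, so altogether $L_k(u) \le L_k(g_u)$. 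The reverse inequality $L_k(g_u) \le L_k(u)$ follows from $u \le g_u$ (since $u$ itself lies in $\F_{u,\D^n}$), again by comparison.

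The step I expect to require the most care is the weak convergence $(dd^c u_j)^k \to (dd^c g_u)^k$: Xing's theorem is most often stated under an isolated-singularity assumption on the limit, so one must check that the induction on $k$ used in Proposition~\ref{theo:Lngu} really needs nothing more than well-definedness of $(dd^c g_u)^k$, which holds here under the codimension hypothesis (\ref{eq:codiml}). This is a technical inspection of \cite[Thm.~5]{X} rather than a genuine conceptual obstruction.
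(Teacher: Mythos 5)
Your modification $u_j := v_j + j^{-1}\log|z|$ does not produce functions with isolated singularity when $u$ itself has non-isolated singularity, which is exactly the case this theorem adds beyond Proposition~\ref{theo:Lngu}. Since $v_j \le u + O(1)$ near $0$, we have $v_j = -\infty$ on all of $u^{-1}(-\infty)$, and adding the term $j^{-1}\log|z|$ (which is $-\infty$ only at the origin) cannot erase any of that singularity set. So $u_j$ inherits the full, positive-dimensional unbounded locus of $v_j$, and the premise on which your whole argument rests fails.

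Even setting that aside, the strategy of pushing the proof of Proposition~\ref{theo:Lngu} through to the non-isolated case runs directly into what the paper flags as an open question: right after Proposition~\ref{theo:Lngu}, the author remarks explicitly that it is not known whether $L_k(u)=L_k(g_u)$ holds for arbitrary $u$ with well-defined $(dd^cu)^k$. The convergence $(dd^cu_j)^k\to(dd^cg_u)^k$ that you defer to ``a technical inspection of Xing's theorem'' is precisely the missing ingredient there, and the paper does not claim it. The actual proof of Theorem~\ref{thm:MAn} avoids the issue entirely: once the isolated case is handled via Proposition~\ref{theo:Lngu} and Theorem~\ref{prop:Grind}, the non-isolated case is reduced to it by slicing with generic $k$-dimensional subspaces $S$. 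Siu's theorem gives $L_k(u|_S)=L_k(u)$ for almost every $S$, and the multi-circled structure gives $\Psi_{u|_S}=\Psi_u|_S$ (this compatibility of indicator with slicing is the special feature of $\MC$ that makes the argument work and has no analogue in your approach). You would need to supply a proof of the convergence claim --- which appears to be genuinely open --- or switch to the slicing argument.
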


\begin{proof}
Proposition \ref{theo:Lngu} and Theorem~\ref{prop:Grind} imply (\ref{eq: MAm}) for $u\in\MC$ with $UL(u)=\{0\}$.

If $\dim u^{-1}(-\infty)>0$, consider restrictions $u|_S$ to $k$-dimensional linear spaces $S$ such that $u|_S$ has isolated singularity. By Siu's theorem (see, e.g., \cite[Thm. 5.13]{D1}), for almost all $S\in G(k,n)$ one has $L_k(u|_S)=L_k(u)$ and $\Psi_{u|_S}=\Psi_u|_S$. Then we refer to the already proved case of isolated singularity.
\end{proof}

\medskip

Note that these relations are nontrivial even in the class of multi-circled singularities, because there exist $u\in\MC$, locally bounded outside $0$ and such that $\limsup_{z\to 0} u(z)/\Psi_u(z)>1$. For example, we can take $$u(z_1,z_2)=\max\{-|\log|z_1||^{1/2},\log|z_2|\} + \max\{\log|z_1|,\log|z_2|\}$$ and observe that $\Psi_u=\max\{\log|z_1|,\log|z_2|\}$, while $\limsup_{z\to 0} u(z)/\Psi_u(z)=2$.
\medskip

One of the benefits of formula (\ref{eq: MAm}) is a possibility of computing the residual Monge-Amp\`ere mass $L_k(u)$ as the (mixed) covolume.

Given a convex set $A\subset\Rnp$, we define its {\it covolume} as $${\rm Covol}\,(A)=
{\rm Vol}\,(\Rnp\setminus A),$$
and consider a form ${\rm Covol}\,(A_1,\ldots,A_n)$ on
$n$-tuples of complete convex subsets $A_1,\ldots,A_n$ of $\Rnp$,
multilinear with respect to Minkowski's addition and such that for
any $A$ with bounded complement in $\Rnp$ we have ${\rm
Covol}\,(A,\ldots,A)={\rm Covol}\,(A)$.

For any $k\le n$, denote by ${\rm Covol}_k(A)$ the {\it (mixed) covolume} of the $n$-tuple consisting of $k$ copies of the set $A$ and $n-k$ copies of the set $$\Delta=\{a\in\Rnp:\: \sum_ja_j\ge 1\}.$$

%The form can be shown to be well defined on the $n$-tuples
%$A_1,\ldots,A_n$ such that $\Rnp\setminus\cup_j A_j$ is bounded.

As was shown in \cite{R}, for any indicator $\Psi_u$ with isolated singularity at $0$,
\beq\label{eq:LnPsi}L_n(\Psi_u)=n!\,{\rm Covol}\,(\Gamma_u),\eeq where $\Gamma_u$ is the indicator diagram of $u$ (\ref{eq:charf}).
 Since $\Delta=\Gamma_{\log|z|}$, the polarization formula for multilinear forms $\mu(s_1,\ldots,s_n)$,
\beq\label{eq:polar} \mu(s_1,\ldots,s_n)=\frac{(-1)^n}{n!}\sum_{j=1}^n\sum_{i_i<\ldots<i_j} \mu\left(\sum_k s_{i_k},\ldots,\sum_k s_{i_k}\right), \eeq
applied to the Monge-Amp\`ere operators and covolumes, extends (\ref{eq:LnPsi}) to the mixed setting, so Theorem~\ref{thm:MAn} implies

\begin{corollary}\label{cor:covol} If $u\in\MC$ satisfies (\ref{eq:codiml}), then
$$ L_k(u)=n!\, {\rm Covol}_k (\Gamma_u),\ \quad 1\le k\le l.$$
\end{corollary}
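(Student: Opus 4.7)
The plan is to combine Theorem~\ref{thm:MAn} with the diagonal formula (\ref{eq:LnPsi}) and promote it to the mixed setting by polarization. By Theorem~\ref{thm:MAn} it suffices to show $L_k(\Psi_u)=n!\,{\rm Covol}_k(\Gamma_u)$. Both sides are values of symmetric multilinear forms: the left-hand side is $\mu(\Psi_u,\ldots,\Psi_u,\log|z|,\ldots,\log|z|)$ where $\mu(s_1,\ldots,s_n)=dd^c s_1\wedge\cdots\wedge dd^c s_n(0)$ (with $k$ copies of $\Psi_u$ and $n-k$ copies of $\log|z|$), while the right-hand side is ${\rm Covol}$ evaluated on $k$ copies of $\Gamma_u$ and $n-k$ copies of $\Delta=\Gamma_{\log|z|}$.

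The bridge between the two forms is the observation that indicators behave linearly in their diagrams: if $\Psi,\Phi$ are indicators with diagrams $\Gamma_\Psi,\Gamma_\Phi$, then $\Psi+\Phi$ is an indicator with diagram $\Gamma_\Psi+\Gamma_\Phi$ (Minkowski sum), because the support function of a Minkowski sum of convex sets is the sum of the support functions (cf.~(\ref{eq:charf})). Thus any nonnegative combination $p\Psi_u+q\log|z|$ is itself an indicator with diagram $p\Gamma_u+q\Delta$, and formula (\ref{eq:LnPsi}) gives
\beq\label{eq:planeq}
\mu\bigl(p\Psi_u+q\log|z|,\ldots,p\Psi_u+q\log|z|\bigr)=n!\,{\rm Covol}(p\Gamma_u+q\Delta).
\eeq
Now apply the polarization formula (\ref{eq:polar}) to $\mu$ with $s_1=\cdots=s_k=\Psi_u$ and $s_{k+1}=\cdots=s_n=\log|z|$: this writes $L_k(\Psi_u)$ as a fixed integer combination of diagonal Monge-Amp\`ere masses of sums $p_I\Psi_u+q_I\log|z|$. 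Applying the same identity to the covolume form with $A_1=\cdots=A_k=\Gamma_u$ and $A_{k+1}=\cdots=A_n=\Delta$ writes ${\rm Covol}_k(\Gamma_u)$ as the same integer combination of ${\rm Covol}(p_I\Gamma_u+q_I\Delta)$. Invoking (\ref{eq:planeq}) term by term, the two expansions differ by exactly the factor $n!$, giving $L_k(\Psi_u)=n!\,{\rm Covol}_k(\Gamma_u)$.

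The main obstacle is that when $l<n$, the unbounded locus of each sum $p\Psi_u+q\log|z|$ has codimension only $l$, so the full Monge-Amp\`ere power $(dd^c\cdot)^n$ in (\ref{eq:planeq}) is not defined and the polarization argument cannot be carried out directly on $\Cn$. This is exactly the difficulty resolved in the proof of Theorem~\ref{thm:MAn}: by Siu's theorem, for almost every $S\in G(k,n)$ the restriction $u|_S$ has isolated singularity with $L_k(u|_S)=L_k(u)$ and $\Psi_{u|_S}=\Psi_u|_S$, while $\Gamma_{\Psi_u|_S}$ is the projection of $\Gamma_u$ onto $\R_+^k$ so that ${\rm Covol}(\Gamma_{\Psi_u|_S})={\rm Covol}_k(\Gamma_u)$ up to the expected normalization. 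The $l=n$ case established above, applied to $u|_S$, then completes the argument.
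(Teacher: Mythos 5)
Your plan — use Theorem~\ref{thm:MAn} to reduce to the indicator, then polarize (\ref{eq:LnPsi}) — is exactly the route the paper takes, and you spell out the polarization step (Minkowski-linearity of indicator diagrams, the identity (\ref{eq:planeq}) on the diagonal) more explicitly than the one-line justification given there. You also correctly flag a real technical point that the paper glosses over: when $l<n$, the polarization formula (\ref{eq:polar}) produces diagonal terms of the form $(dd^c(p\Psi_u+q\max_j\log|z_j|))^n(0)$, and since $\max_j\log|z_j|$ is finite off the origin, adding it does \emph{not} shrink the unbounded locus of $\Psi_u$; so those terms are genuinely undefined, and the naive polarization argument breaks down.

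The problem is with your proposed repair. It is not true in general that $\Gamma_{\Psi_u|_S}$ is the projection of $\Gamma_u$ onto a coordinate $k$-plane for the generic $S\in G(k,n)$ supplied by Siu's theorem (those $S$ are not coordinate subspaces), and more seriously the key claim that ${\rm Covol}^{(k)}(\Gamma_{\Psi_u|_S})$ recovers $n!\,{\rm Covol}_k(\Gamma_u)/k!$ for almost every $S$ is left unjustified. Kubota-type formulas for mixed (co)volumes give such identities only after \emph{averaging} over $S\in G(k,n)$, not for an individual generic $S$; the statement for a.e.\ single $S$ is in effect equivalent to the corollary you are trying to prove, so the argument is circular as written. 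A non-circular fix is to approximate $\Psi_u$ from above by indicators with isolated singularity, e.g.\ $\Psi_u^{(N)}=\max\{\Psi_u,\,N\max_j\log|z_j|\}$, apply the already-established $l=n$ case to each $\Psi_u^{(N)}$ (there the polarization is legitimate, since every set $p\Gamma_{\Psi_u^{(N)}}+q\Delta$ has bounded complement and every $n$-fold Monge--Amp\`ere power is defined), and then pass to the limit $N\to\infty$: the left side converges to $L_k(\Psi_u)$ by Bedford--Taylor continuity under decreasing sequences, and the right side converges to $n!\,{\rm Covol}_k(\Gamma_u)$ by monotone convergence of the covolumes.
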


\subsection{Integrability index and directional Lelong numbers}

Here we use Proposition~\ref{theo:iind} and Theorem~\ref{prop:Grind} for re-proving Kiselman's result \cite{Kis3} on computation of the integrability index $\lambda_u$ (\ref{ii}) of a multi-circled function $u$ by means of its Lelong-Kiselman numbers.

In what follows, we will repeatedly use the following elementary

\begin{lemma}\label{lem:conv} Let $$F(r)=\int_Ae^{-rg(t)}\,dV_p(t),\quad r>0,$$ where
$g$ is a bounded convex function on a bounded convex open set $A\subset\Rp$. Then
\beq\label{eq:genconv}\int_0^\infty F(r)\,dr<\infty\eeq
if and only if $\inf_A g>0$.
\end{lemma}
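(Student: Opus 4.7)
The plan is to apply Fubini--Tonelli to reduce the integrated quantity to a single integral over $A$ of $1/g$, and then handle the two implications separately, with the ``only if'' direction being where the convexity hypothesis must enter.

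Since $e^{-rg(t)}\ge 0$, Tonelli's theorem allows exchanging the order of integration:
\[
\int_0^\infty F(r)\,dr=\int_A\left(\int_0^\infty e^{-rg(t)}\,dr\right)\,dV_p(t).
\]
For each fixed $t$, the inner integral equals $g(t)^{-1}$ when $g(t)>0$, and diverges to $+\infty$ when $g(t)\le 0$. Thus convergence of $\int_0^\infty F(r)\,dr$ is equivalent to both $g>0$ almost everywhere on $A$ and $\int_A g(t)^{-1}\,dV_p(t)<\infty$. This identification turns the problem into a comparison between $\inf_A g$ and the local integrability of $1/g$.

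The ``$\Leftarrow$'' direction is then essentially immediate: if $m:=\inf_A g>0$, then $g(t)^{-1}\le m^{-1}$ on $A$, so $\int_A g(t)^{-1}\,dV_p(t)\le m^{-1}\mathrm{Vol}(A)<\infty$ because $A$ is bounded. For the ``$\Rightarrow$'' direction I would argue by contrapositive. If $\inf_A g<0$, then by continuity of the convex function $g$ on the open set $A$ there is an open subset where $g<-\delta$ for some $\delta>0$, and the inner integral is $+\infty$ there. The delicate sub-case is $\inf_A g=0$: pick a sequence $t_k\in A$ with $g(t_k)\to 0$, extract a limit point $t_0\in\bar A$, and use convexity along segments emanating from $t_0$ to control the growth of $g$ from above by a linear function. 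This linear upper bound yields a lower bound $g(t)^{-1}\gtrsim 1/|t-t_0|$ along rays, producing a non-integrable singularity.

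The main obstacle is precisely this final ``$\Rightarrow$'' sub-case when $\inf_A g=0$: one must convert the convexity-based linear upper bound on $g$ into a genuine divergence of $\int_A g^{-1}\,dV_p$. The clean way to carry this out is to reduce, via Fubini once more, to a one-dimensional slice along which $g$ is a convex function vanishing at an endpoint; on such a segment the integral $\int ds/g(s)$ diverges logarithmically, and this suffices after integrating over the transverse directions. All other steps are routine once the Fubini reduction and the sublevel-set analysis are in place.
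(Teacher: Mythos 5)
Your Tonelli reduction to $\int_A g(t)^{-1}\,dV_p(t)$ and the ``$\Leftarrow$'' direction are fine, and so is the sub-case $\inf_A g<0$. The gap is exactly where you anticipated, in the sub-case $\inf_A g=0$, but your proposed fix does not close it. The convexity-based upper bound $g(t)\le\gamma|t-t_0|$ gives $g(t)^{-1}\gtrsim |t-t_0|^{-1}$, and $\int_{B(t_0,\delta)\cap A}|t-t_0|^{-1}\,dV_p(t)$ is \emph{finite} whenever $p\ge 2$ (in polar coordinates it is $\int_0^\delta s^{p-2}\,ds$). The one-dimensional slicing idea does not rescue this: only the slice passing through the point where $g$ vanishes produces a divergent $\int ds/g(s)$, and that single slice has measure zero; on nearby slices $g$ stays bounded away from $0$ and the inner integral is finite, with a logarithmic (hence integrable) blow-up as the slice approaches $t_0$. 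A concrete counterexample to the chain of reasoning --- and in fact to the lemma exactly as stated --- is $p=2$, $A=(0,1)^2$, $g(t)=t_1+t_2$: here $\inf_A g=0$, yet $F(r)=\bigl((1-e^{-r})/r\bigr)^2$ and $\int_0^\infty F(r)\,dr<\infty$.

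For what it is worth, the paper's own proof has the same flaw: from $0\le g(t)\le\gamma|t-t_0|$ one only gets $F(r)\gtrsim r^{-p}$, not the claimed $F(r)\ge c\,r^{-1}$, and $r^{-p}$ is integrable at infinity for $p\ge 2$. What makes the argument work in the paper's actual application is a Jacobian factor $r^{n-1}$ (with $p=n-1$) that is silently dropped when passing to the $(r,\theta)$ coordinates on $\mathbb R_-^n\cong(0,\infty)\times\Lambda$: the quantity one really needs to control is $\int_0^\infty r^p F(r)\,dr=p!\int_A g(t)^{-(p+1)}\,dV_p(t)$, and the linear bound on $g$ then gives a genuine divergence $\int_0^\delta s^{p-1}/s^{p+1}\,ds=\int_0^\delta s^{-2}\,ds=\infty$. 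So the correct version of the lemma should read ``$\int_0^\infty r^{p}F(r)\,dr<\infty$ iff $\inf_A g>0$''; with that change both your Tonelli approach and the paper's pointwise lower bound on $F$ go through, and the applications in the paper are restored once the missing $r^{n-1}$ is reinstated.
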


\begin{proof} Evidently, (\ref{eq:genconv}) implies $g\ge 0$ on $A$. Let $g(t_0)=0$ for a point $t_0\in\overline A$. Since $g$ is convex, the overgraph of $g$ contains the intersection of a neighborhood $U$ of $t_0$ with a cone with vertex at $t_0$. By a coordinate change, this means $0\le g(t)\le \gamma|t|$ for $t\in U\cap A$. Therefore, there exists $C>0$ such that $F(r)\ge cr^{-1}$ for all $r>0$, which contradicts (\ref{eq:genconv}).
The reverse implication is evident.
\end{proof}

\medskip

It is easy to compute the integrability index
 for an {\it indicator}, i.e., a multi-circled function $\Psi\in\PSH^-(\D^n)$ whose convex image $\widehat\Psi$ satisfies $\widehat\Psi(ct)=c\,\widehat\Psi(t)$ for every $c>0$.

\begin{lemma}\label{prop:iiind} For any indicator $\Psi$, its integrability index $\lambda_\Psi=\sup_t\widehat\Psi(t)/\sum t_j$, and $\Psi/\lambda_\Psi\not\in L^2_{loc}(0)$.
\end{lemma}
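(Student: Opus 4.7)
The plan is to compute $\|e^{-\Psi/\lambda}\|_{L^2(\D^n)}^2$ directly in logarithmic polar coordinates and exploit the positive $1$-homogeneity of $\widehat\Psi$. First I would pass to the variables $z_j = e^{t_j+i\theta_j}$, so that $dV(z) = e^{2\sum t_j}\,dt\,d\theta$ and the integral reduces, up to the harmless factor $(2\pi)^n$, to $\int_{\Rnm} e^{-2\widehat\Psi(t)/\lambda + 2\sum t_j}\,dt$.

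Next, I would perform a radial decomposition adapted to the homogeneity: write $t = rs$ with $r>0$ and $s$ in the simplex $\Sigma = \{s \in \overline{\Rnm} : \sum_j s_j = -1\}$. Since $\widehat\Psi(rs) = r\widehat\Psi(s)$ and $\sum t_j = -r$, this turns the integral into $c_n \int_\Sigma d\sigma(s) \int_0^\infty r^{n-1} e^{-2r h_\lambda(s)}\,dr$, where $h_\lambda(s) := 1 + \widehat\Psi(s)/\lambda$. Homogeneity makes the supremum in the statement of the lemma attained along $\Sigma$, giving $\lambda^* := \sup_t \widehat\Psi(t)/\sum_j t_j = -\inf_\Sigma \widehat\Psi$, and therefore $\inf_\Sigma h_\lambda = 1 - \lambda^*/\lambda$.

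For $\lambda > \lambda^*$, $h_\lambda$ is bounded below by a positive constant on $\Sigma$, the inner integral equals $\Gamma(n)(2h_\lambda(s))^{-n}$, and integrating this bounded function over the compact $\Sigma$ is finite; this gives $\lambda_\Psi \le \lambda^*$. For $\lambda < \lambda^*$, $h_\lambda$ is strictly negative on a set of positive measure in $\Sigma$, and on that set the inner $r$-integral is already $+\infty$, so divergence is immediate.

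The main obstacle is the critical value $\lambda = \lambda^*$, where $h_{\lambda^*}$ is convex, nonnegative, and vanishes somewhere on $\Sigma$, but the pointwise $r$-integral still diverges only on a set of measure zero. I would apply Lemma~\ref{lem:conv} to $F(r) = \int_\Sigma e^{-2r h_{\lambda^*}(s)}\,d\sigma(s)$, taking for $A$ the relative interior of $\Sigma$ (a bounded convex open set of dimension $n-1$) and $g = 2h_{\lambda^*}$: since $\inf g = 0$, the lemma yields $\int_0^\infty F(r)\,dr = \infty$. Because $F(r) \le |\Sigma|$ uniformly, the divergence must come from the tail, so $\int_1^\infty F(r)\,dr = \infty$, and hence $\int_0^\infty r^{n-1} F(r)\,dr \ge \int_1^\infty F(r)\,dr = \infty$. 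This simultaneously shows that $e^{-\Psi/\lambda^*} \notin L^2_{loc}(0)$ and that $\lambda_\Psi \ge \lambda^*$, completing the proof.
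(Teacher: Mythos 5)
Your proof is correct and follows essentially the same route as the paper's: pass to logarithmic coordinates, decompose radially along the simplex $\Sigma=\{s\in\overline{\Rnm}:\sum s_j=-1\}$, and invoke Lemma~\ref{lem:conv} to settle the critical exponent. You are in fact a bit more careful than the paper's displayed computation, which omits the Jacobian factor $r^{n-1}$; your observation that $F$ is bounded so the divergence lives in the tail where $r^{n-1}\ge 1$ supplies exactly the detail needed to apply Lemma~\ref{lem:conv} with that factor present.
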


\begin{proof}
Since $\Psi$ depends only on
absolute values of the variables, it suffices to check the integral
over the unit polydisk $\D^n$. By transition to the function
$\widehat\Psi(t)$, we get for any $\lambda>0$,
$$\frac1{(2\pi)^n}\int_{\D^n}e^{-2\Psi/\lambda} \,dV_{2n}=
\int_{\mathbb R_+^n}e^{2\left(\sum t_j-\widehat\Psi(t)/\lambda\right)}
\,dV_n = \int_{0}^\infty \int_{\Lambda}
e^{-2r\left(1+\widehat\Psi(t)/\lambda\right)}\,dV_{n-1}(t)\,dr,$$ where
$\Lambda=\{t\in{\mathbb R}_-^n:\: \sum_kt_k=-1\}$. Therefore, by Lemma~\ref{lem:conv}, the
integral converges if and only if
$\lambda>\sup\{-\widehat\Psi(t):\:t\in\Lambda\}$.
\end{proof}

\begin{theorem}\label{prop:lambdau} For any $u\in\MC$,
\begin{equation}\label{eq:lambdau}\lambda_{u}=\lambda_{\Psi_u}=\sup_{a\in\Rnp}\,\frac{\nu_u(a)}{\sum_i a_i}  \end{equation}
and $e^{-u/\lambda_u}\not\in L^2_{loc}(0)$.
\end{theorem}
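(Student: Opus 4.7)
The plan is the following. The identity $\lambda_u=\lambda_{\Psi_u}$ comes for free by combining the two preparatory results: Proposition~\ref{theo:iind} gives $\lambda_u=\lambda_{g_u}$, and Theorem~\ref{prop:Grind} gives $g_u=\Psi_u$ for $u\in\MC$. So the task reduces to computing $\lambda_{\Psi_u}$ and rewriting the result in terms of the directional Lelong numbers $\nu_u$.

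Next, I apply Lemma~\ref{prop:iiind} to the indicator $\Psi=\Psi_u$, which by construction has the positively homogeneous form required by that lemma. This directly yields
$$\lambda_{\Psi_u}=\sup_{t\in\Rnm}\frac{\widehat\Psi_u(t)}{\sum_j t_j},$$
together with the non-integrability $\Psi_u/\lambda_{\Psi_u}\not\in L^2_{loc}(0)$. Then I would invoke the identity $\nu_u(a)=-\widehat\Psi_u(-a)$ for $a\in\Rnp$: substituting $a=-t$ and using $\sum_j t_j=-\sum_j a_j$ converts the ratio into $\nu_u(a)/\sum_j a_j$ and produces~(\ref{eq:lambdau}). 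The remaining claim $e^{-u/\lambda_u}\not\in L^2_{loc}(0)$ is automatic: $u\le\Psi_u$ and $\lambda_u=\lambda_{\Psi_u}$ give the pointwise bound $e^{-u/\lambda_u}\ge e^{-\Psi_u/\lambda_{\Psi_u}}$, and the right-hand side is not locally integrable by Lemma~\ref{prop:iiind}.

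The one step to verify is the identity $\nu_u(a)=-\widehat\Psi_u(-a)$. By formula (\ref{eq:indmc}), evaluating the liminf defining $\nu_u(a)$ along the radial direction $x_k=e^{-ra_k}$ (so that $\phi_a(x)=-r$ and $r^{-1}\widehat u(-ra)\to\widehat\Psi_u(-a)$) supplies the inequality $\nu_u(a)\le-\widehat\Psi_u(-a)$. The reverse inequality is the main technical point: using the coordinate-wise bound $t_k\le a_k\phi_a(t)$ against each $b\in\Gamma_u$ gives $\langle b,t\rangle\le\phi_a(t)\langle b,a\rangle$; taking the sup over $b$ yields $\widehat\Psi_u(t)\le-\phi_a(t)\widehat\Psi_u(-a)$; dividing by $\phi_a(t)<0$ and combining with the asymptotic $\widehat u\sim\widehat\Psi_u$ at infinity in $\Rnm$ then gives the required liminf lower bound. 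I expect this computation, rather than the structural steps above, to be the principal (mild) obstacle.
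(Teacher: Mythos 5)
Your proof is correct and follows essentially the same route as the paper's, which simply cites Proposition~\ref{theo:iind}, Theorem~\ref{prop:Grind}, Lemma~\ref{prop:iiind}, (\ref{eq:defind}), and the domination $u\le\Psi_u+O(1)$. The only differences are (i) you correctly note that the conversion between $\widehat\Psi_u$ and $\nu_u$ should be $\nu_u(a)=-\widehat\Psi_u(-a)$, which is in fact just (\ref{eq:defind}) read with the intended sign convention (as printed, (\ref{eq:defind}) would make $\Psi_u$ nonnegative on $\D^n$, contradicting $\Psi_u\in\PSH^-(\D^n)$, so no separate verification is needed — the ``mild obstacle'' you flag is illusory); and (ii) the phrase ``the asymptotic $\widehat u\sim\widehat\Psi_u$'' in your reverse inequality is imprecise — what you actually need, and what does the job, is the one-sided bound $\widehat u\le\widehat\Psi_u$, since both are negative and you divide by $\phi_a(t)<0$.
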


\begin{proof} By (\ref{eq:defind}), this follows from Proposition~\ref{theo:iind}, Theorem~\ref{prop:Grind}, Lemma~\ref{prop:iiind} and the inequality $u\le \Psi_u+O(1)$.

\end{proof}

\medskip
Note that for $\Psi_u$ with isolated singularity  (\ref{eq:lambdau}) can be rewritten as
\beq\label{eq:ltype}\lambda_u^{-1}=\liminf_{z\to 0} \frac{\log|z_1\ldots z_n|}{\Psi_u(z)},\eeq
representing thus it as the relative type of $\log|z_1\ldots z_n|$ with respect to $\Psi_u$.

\subsection{Multiplier ideals}

Here we use formula (\ref{eq:lambdau}) for a description of multiplier ideals for multi-circled singularities. When the singularity is generated by a monomial ideal, this repeats Howald's results \cite{Ho}. See also \cite{Gu} and \cite{MN}.

Let $\cJ(u)$ be the multiplier ideal of a function $u\in \PSH_0$, i.e., the collection of functions (germs) $f\in\cO_0$ such that $|f|e^{-u}\in L^2_{loc}$ near $0$.

\begin{lemma} If $u\in\MC$, then the ideal $\cJ(u)$ is monomial.
\end{lemma}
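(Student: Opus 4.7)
The plan is to exploit the torus-invariance of $u$ together with Parseval's identity to show that whenever $f=\sum_\alpha c_\alpha z^\alpha\in\cJ(u)$, every individual monomial $c_\alpha z^\alpha$ also lies in $\cJ(u)$. In the Noetherian local ring $\cO_0$, an ideal with this property is automatically generated by finitely many of the monomials it contains, which is precisely the definition of a monomial ideal recalled in the introduction.

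For the core step, I would fix $f\in\cJ(u)$ and a polydisk $U=\{|z_j|<R_j\}$ small enough that the Taylor series of $f$ converges absolutely on $U$ and $\int_U|f|^2 e^{-2u}\,dV<\infty$. Passing to polar coordinates $z_j=r_je^{i\varphi_j}$, the weight $e^{-2u(r)}$ is independent of the angles since $u\in\MC$. Parseval's identity on the $n$-torus yields, for each fixed $r$ in the polydisk of convergence,
$$(2\pi)^{-n}\int_{[0,2\pi]^n}|f(re^{i\varphi})|^2\,d\varphi=\sum_\alpha |c_\alpha|^2\,r^{2\alpha},$$
a series of nonnegative terms. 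Since all terms are nonnegative, Tonelli's theorem legitimises interchanging sum and radial integral:
$$\int_U|f|^2 e^{-2u}\,dV=(2\pi)^n\sum_\alpha |c_\alpha|^2\int_{[0,R_1]\times\cdots\times[0,R_n]} r^{2\alpha}\,e^{-2u(r)}\prod_j r_j\,dr_j.$$
Finiteness of the left-hand side forces each summand with $c_\alpha\neq 0$ to be finite, which is exactly the condition $z^\alpha\in\cJ(u)$.

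Since every Taylor monomial appearing in any $f\in\cJ(u)$ therefore belongs to $\cJ(u)$, the ideal is generated by monomials, and Noetherianity of $\cO_0$ reduces this to a finite monomial system of generators. The only technical point needing attention is the Tonelli interchange, but after the Parseval step the integrand is nonnegative and the interchange is immediate; I do not anticipate any further obstacle.
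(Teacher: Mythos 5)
Your proof is correct and follows essentially the same route as the paper's: the key point in both is that, because the weight $e^{-2u}$ is torus-invariant, the monomials $z^\alpha$ are orthogonal in the weighted $L^2$ space, so each monomial appearing in the Taylor expansion of an $f\in\cJ(u)$ must itself lie in $\cJ(u)$. You merely make explicit the Parseval/Tonelli bookkeeping (and the Noetherian finish) that the paper passes over silently when it writes the double sum over $\alpha,\beta$ and collapses the off-diagonal terms.
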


\begin{proof} Let $f=\sum c_\alpha z^\alpha\in\cJ(v)$, then for a polydisk $D$ around $0$, we have
$$ \int_D |f|^2e^{-2u}\,dV= \sum_{\alpha,\beta}c_\alpha\overline{c_\beta}\int_D z^\alpha
\bar z^\beta e^{-2u(|z_1|,\ldots,|z_n|)}\,dV(z)= \sum_\alpha |c_\alpha|^2\int_D |z^\alpha|^2e^{-2u}\,dV,$$
which implies $z^\alpha\in\cJ(u)$ for any $\alpha$ such that $c_\alpha\neq 0$.
\end{proof}

\medskip

Let $\Gamma_u\subset\Rnp$ be the indicator diagram (\ref{eq:charf}) of $u$. Denote ${\bf 1}=(1,\ldots,1)\in\Z^n_+$.

\begin{proposition} Let $u\in \MC$. Then $z^\alpha\in\cJ(u)$ iff $\alpha+{\bf 1}\in int\,\Gamma_u$.
\end{proposition}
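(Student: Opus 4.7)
\medskip
\noindent\textit{Plan of proof.}
The approach is to exploit the multi-circled structure to reduce the $L^2$-integrability of $|z^\alpha|e^{-u}$ to a convex-geometric condition on $\Gamma_u$. Integrating out the angular variables and substituting $t_j=\log|z_j|$ gives
\[
\int_{\D^n}|z^\alpha|^2 e^{-2u}\,dV = (2\pi)^n\int_{\Rnm} e^{2[\langle\alpha+\mathbf{1},t\rangle-\widehat u(t)]}\,dt,
\]
so, since $\widehat u$ is bounded near $t=0$, only the tail $|t|\to\infty$ matters. In polar coordinates $t=rs$ with $r>0$ and $s\in\Sigma=\{s\in\overline{\Rnm}:\sum_j s_j=-1\}$, the tail integral becomes $\int_T^\infty r^{n-1}\!\int_\Sigma\! e^{2r[\langle\alpha+\mathbf{1},s\rangle-r^{-1}\widehat u(rs)]}\,d\sigma(s)\,dr$; the uniform convergence $r^{-1}\widehat u(rs)\to\widehat\Psi_u(s)$ on the compact set $\Sigma$ (already established in the proof of Theorem~\ref{prop:Grind}) shows that integrability is governed by the sign of the convex function $g(s):=\widehat\Psi_u(s)-\langle\alpha+\mathbf{1},s\rangle$ on $\Sigma$.

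For sufficiency, suppose $\alpha+\mathbf{1}\in\operatorname{int}\Gamma_u$ and fix $\eta>0$ so that $\alpha+\mathbf{1}-\eta e_j\in\Gamma_u$ for every $j$. For any $s\in\Sigma$, the constraint $\sum s_j=-1$ with $s_j\le 0$ forces $s_{j^*}\le-1/n$ for some index $j^*$; then $a=\alpha+\mathbf{1}-\eta e_{j^*}\in\Gamma_u$ yields
\[
\widehat\Psi_u(s)\ge\langle a,s\rangle = \langle\alpha+\mathbf{1},s\rangle-\eta s_{j^*}\ge\langle\alpha+\mathbf{1},s\rangle+\eta/n,
\]
so $g\ge\eta/n$ uniformly on $\Sigma$. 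Combined with the uniform convergence, the tail integrand is bounded by $e^{-\delta r}$ for $r$ large with some $\delta>0$, and integrability follows.

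For necessity, the inequality $u\le\Psi_u$ gives $\widehat u\le\widehat\Psi_u$ pointwise, hence the $u$-integrand dominates the $\Psi_u$-integrand, so it suffices to disprove integrability for the indicator. If $\alpha+\mathbf{1}\notin\Gamma_u$, separation of convex sets produces $s_0\in\Sigma$ with $g(s_0)<0$ (the separating functional lies in $\overline{\Rnm}$ because $\Rnp$ is contained in the recession cone of $\Gamma_u$), and the integrand blows up exponentially in a neighborhood of $s_0$. If $\alpha+\mathbf{1}\in\partial\Gamma_u$, then $g\ge 0$ on $\Sigma$ but $g(s_0)=0$ at a supporting point; convexity of $g$ then forces a cone bound $g(s)\le\gamma|s-s_0|$ on a $\Sigma$-neighborhood of $s_0$ (exactly as in the proof of Lemma~\ref{lem:conv}), which gives $\int_\Sigma e^{-2rg(s)}\,d\sigma(s)\ge C r^{-(n-1)}$ for $r$ large, so the remaining $r$-integral $\int_T^\infty r^{n-1}\cdot C r^{-(n-1)}\,dr$ diverges.

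The main technical obstacle is the sufficiency direction, because $\widehat u$ may differ substantially from $\widehat\Psi_u$ (as the example following Theorem~\ref{thm:MAn} shows). One has to upgrade the pointwise asymptotic $r^{-1}\widehat u(rs)\to\widehat\Psi_u(s)$ to uniform convergence on $\Sigma$, in order to absorb the correction into the strict margin $\eta/n$ produced by the interiority of $\alpha+\mathbf{1}$ in $\Gamma_u$; this uniformity is precisely the Dini-type argument used via the monotone ratios $h_u(c)$ inside the proof of Theorem~\ref{prop:Grind}, and once it is in hand, the remaining estimates are routine.
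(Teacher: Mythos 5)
Your proof is correct, and it takes a genuinely different route from the paper. The paper's argument is modular: it first establishes $\cJ(u)=\cJ(\Psi_u)$ by approximating $\Psi_u=g_u$ from below by functions $u_m\le u+O(1)$, dilating by $1+\alpha_j$, and invoking the Demailly--Koll\'ar semicontinuity theorem for integrability exponents together with the sharp non-integrability $e^{-U_\alpha/\lambda_{U_\alpha}}\notin L^2_{loc}$ from Theorem~\ref{prop:lambdau}; only then does it compute $\cJ(\Psi_u)$ via Lemma~\ref{lem:conv}. You instead bypass Demailly--Koll\'ar entirely: for the sufficiency direction you work directly with $\widehat u$, using the uniform convergence $r^{-1}\widehat u(rs)\to\widehat\Psi_u(s)$ on $\Sigma$ (extracted from the Dini argument inside the proof of Theorem~\ref{prop:Grind}) to absorb the discrepancy between $\widehat u$ and $\widehat\Psi_u$ into the strict margin $\eta/n$ coming from $\alpha+\mathbf 1\in\operatorname{int}\Gamma_u$; for necessity you only need the trivial comparison $\widehat u\le\widehat\Psi_u$ and then the Lemma~\ref{lem:conv}-type cone estimate on $\Sigma$. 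This buys you a more elementary, self-contained proof that never mentions semicontinuity of $\lambda$ or the sequence $u_m$; the price is that you re-derive in the sufficiency step what the paper packages as the single clean identity $\cJ(u)=\cJ(\Psi_u)$. One small point worth making explicit in your sufficiency step: you also need the integral over $\{|t|\le T\}$ to be finite, which follows because $\widehat u$ is increasing in each $t_j$ and therefore bounded below by $\widehat u(-T\mathbf 1)>-\infty$ on that set --- your phrase ``$\widehat u$ is bounded near $t=0$'' should be expanded to this monotonicity argument, since $\widehat u$ is not a priori bounded on arbitrary compacta of $\overline{\Rnm}$.
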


\begin{proof} We start with showing $\cJ(u)=\cJ(\Psi_u)$. Since $u\le\Psi(u)+O(1)$, we have $\cJ(u)\subset\cJ(\Psi_u)$.

We can assume that $u\in\PSH^-({\D^n})$. For any $W\in \MC\cap\PSH({\D^n})$, the function $w(t)=W(e^{t_1},\ldots,e^{t_n})$ is convex on $\Rnm$.
Take any any $\alpha\in\Z^n_+$, then
\beq\label{eq:alpha}\frac1{(2\pi)^n}\int_{\D^n}|z^\alpha|^2e^{-2W} \,dV_{2n}=
\int_{\mathbb R_+^n}e^{2\left(\langle \alpha+{\bf 1},t\rangle-w(t)\right)}
\,dV_n
= C(\alpha)\int_{\mathbb R_+^n}e^{2\left(\sum s_j-w_\alpha(s)\right)}
\,dV_n,
\eeq
where $c(\alpha)= \prod_j(1+\alpha_j)^{-1}$ and $w_\alpha(s)=w((1+\alpha_1)s_1,\ldots,(1+\alpha_n)s_n)$.

As a consequence, $z^\alpha\in\cJ(W)$ iff the integral of the right hand side in (\ref{eq:alpha}) is finite, and the finiteness of the integral implies $\lambda_{w_\alpha}\le 1$.

Let $u_m$ be a
sequence satisfying $u_m\le u+O(1)$ and increasing a.e. to $g_u=\Psi_u$. Then $\cJ(u_m)\subset\cJ(u)$. On the other hand, the functions $$u_{m,\alpha}(z)=u_m(|z_1|^{1+\alpha_1},\ldots,|z_1|^{1+\alpha_1})$$ increase almost everywhere to $$U_\alpha(z)=\Psi_u(|z_1|^{1+\alpha_1},\ldots,|z_1|^{1+\alpha_1}).$$ By the upper semicontinuity of the
integrability index, $\lambda(u_{m,\alpha})$ decrease to $\lambda(U_\alpha)$. By Proposition~\ref{prop:lambdau},  $e^{-U_\alpha/\lambda_{U,\alpha}}\not\in L^2_{loc}$, so $z^\alpha\in\cJ(\Psi_u)$ implies $\lim\lambda(u_{m,\alpha})< 1$. In turn, this means $z^\alpha\in\cJ(u_m)$, so $\cJ(\Psi_u)\subset\cJ(u_m)\subset\cJ(u)$.

Finally, we need  a description for $\cJ(\Psi_u)$. By the first equality in (\ref{eq:alpha}), $z^\alpha\in\cJ(\Psi_u)$ iff
$$ \int_{\mathbb R_+^n}e^{2\left(\langle \beta,t\rangle-\widehat\Psi_u(t)\right)}
\,dV_n = \int_{0}^\infty \int_{\Lambda_\beta}
e^{-2r(1+\widehat\Psi_u(t))}\,dV_{n-1}(t)\,dr<\infty,
$$
where $\beta=\alpha+{\bf 1}$ and $\Lambda_\beta=\{t\in{\mathbb R}_-^n:\: \langle \beta,t\rangle=-1\}$. By Lemma~\ref{lem:conv}, the integral converges iff $\widehat\Psi_u(t)>-1$ on $\Lambda_\beta$, which is equivalent to $\langle \beta,t\rangle<\widehat\Psi_u(t)$ for all $t\in\Rnm$ and thus to $\beta\not\in int\,\Gamma_u$.
\end{proof}

\subsection{Integrability index and $L_k$}

To compare integrability index $\lambda(u)$ with higher order Lelong numbers $L_k(u)$, $u\in\MC$, we need the following computation of $L_K(\phi_a)$ for the functions $\phi_a$ defined by (\ref{eq:phia}), $a\in\Rnp$.

\begin{lemma} \label{lem:lnphia} $L_k(\phi_a)=(\max_{|J|=k}a_J)^{-1}$, where $a_J=a_{j_1}\cdot\ldots\cdot a_{j_k}$ for $J=(j_1,\ldots,j_k)$.
\end{lemma}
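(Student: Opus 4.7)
The plan is to recognize $\phi_a$ as its own indicator, apply Corollary~\ref{cor:covol}, and evaluate the resulting mixed covolume.

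Since $\widehat{\phi_a}(t)=\max_k a_k^{-1}t_k$ is positively homogeneous of degree one on $\Rnm$, formula (\ref{eq:indmc}) gives $\Psi_{\phi_a}=\phi_a$. Moreover, $\widehat{\phi_a}$ is the support function of the simplex $K_a=\mathrm{conv}\{a_1^{-1}e_1,\ldots,a_n^{-1}e_n\}$, so from (\ref{eq:charf1}) the indicator diagram is
\[ \Gamma_{\phi_a}=K_a+\overline{\Rnp}=\{b\in\overline{\Rnp}:\langle a,b\rangle\ge 1\}, \]
the last equality following because the affine hyperplane through the vertices $a_k^{-1}e_k$ has equation $\sum_k a_kb_k=1$. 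Since $\phi_a$ has isolated singularity at $0$, Corollary~\ref{cor:covol} gives
\[ L_k(\phi_a)=n!\cdot\mathrm{Covol}_k(\Gamma_{\phi_a}), \]
the mixed covolume of $k$ copies of $\Gamma_{\phi_a}$ and $n-k$ copies of $\Delta$.

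The remaining step is the evaluation of this mixed covolume. Using the polarization formula (\ref{eq:polar}), the mixed covolume reduces to covolumes of Minkowski sums $p\Gamma_{\phi_a}+q\Delta=(pK_a+q\Sigma)+\Rnp$ with $\Sigma=\mathrm{conv}\{e_k\}$; each such covolume is the volume in $\Rnp$ below the Pareto (coordinatewise-minimal) boundary of the simplex-sum $pK_a+q\Sigma$, whose Pareto vertices are of the form $pa_i^{-1}e_i+qe_j$ and admit an explicit enumeration in terms of orderings of $\{1,\ldots,n\}$. Collecting the simplicial contributions and matching coefficients with $\sum_k\binom{n}{k}p^kq^{n-k}\mathrm{Covol}_k(\Gamma_{\phi_a})$ produces the closed form $\mathrm{Covol}_k(\Gamma_{\phi_a})=(n!\max_{|J|=k}a_J)^{-1}$, which is the claim.

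The main obstacle is the Pareto-boundary combinatorics in general dimension. A cleaner formulation is to appeal to the general identity
\[ n!\cdot\mathrm{Covol}(A_1,\ldots,A_n)=\min_{\sigma\in S_n}\prod_{i=1}^n(c^{(i)}_{\sigma(i)})^{-1} \]
for corners $A_i=\{b\in\overline{\Rnp}:\langle c^{(i)},b\rangle\ge 1\}$ with each $c^{(i)}$ componentwise positive; in our two-type setting (each $c^{(i)}$ equal to either $a$ or $\mathbf{1}$), the minimum over permutations collapses to the minimum over $k$-subsets $J\subset\{1,\ldots,n\}$ specifying which factors come from the $a$-type corners, yielding $\min_{|J|=k}\prod_{j\in J}a_j^{-1}=1/\max_{|J|=k}a_J$ and hence the stated formula.
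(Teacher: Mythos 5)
Your route is genuinely different from the paper's. You reduce the lemma to a mixed covolume computation via Corollary~\ref{cor:covol}, whereas the paper proves it by Siu's slicing theorem: for $k<n$ it restricts $\phi_a$ to a generic $k$-dimensional subspace $S$, shows that $\phi_a|_S=\max_{j\in I}a_j^{-1}\log|z_j|+O(1)$ where $I$ indexes the $k$ largest components of $a$ (this uses that $a_j\le\min_{i\in I}a_i$ for $j\notin I$, so each extra term is dominated by the max over $I$), and then invokes the known $k=n$ formula on $S$. Your setup is sound: $\phi_a$ is its own indicator, the identification $\Gamma_{\phi_a}=\{b\in\overline{\Rnp}:\langle a,b\rangle\ge1\}$ is correct, and Corollary~\ref{cor:covol} applies since $\phi_a$ has isolated singularity.

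The gap is in the last step. You acknowledge that the Pareto-boundary bookkeeping behind the first formulation is "the main obstacle" and do not carry it out, and then replace it by the asserted ``general identity''
$n!\,\mathrm{Covol}(A_1,\dots,A_n)=\min_{\sigma\in S_n}\prod_i(c^{(i)}_{\sigma(i)})^{-1}$
for corners $A_i=\{b\ge0:\langle c^{(i)},b\rangle\ge1\}$. That identity appears to be true (and the reduction from it to $\min_{|J|=k}\prod_{j\in J}a_j^{-1}$ is correct), but it is not a standard quotable fact and is essentially a strengthening of the lemma itself (it is the covolume form of the mixed Monge--Amp\`ere mass of arbitrary $n$-tuples of directional weights). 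One direction, $\le$, is easy: $\phi_{c^{(i)}}\ge(c^{(i)}_{\sigma(i)})^{-1}\log|z_{\sigma(i)}|$ plus Demailly's comparison theorem gives $L(\phi_{c^{(1)}},\dots,\phi_{c^{(n)}})\le\prod_i(c^{(i)}_{\sigma(i)})^{-1}$ for each $\sigma$. The matching lower bound is precisely the content that needs proving, and establishing it requires either the Pareto/polarization computation you deferred or a slicing argument of the type the paper uses. So as written the proposal is incomplete at its key computational step; if you supply a proof or a reference for the corner--covolume identity, the argument closes and in fact yields the more general mixed formula, which is a nice bonus over the paper's version.
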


\begin{proof} For $k=n$ this is a well-known fact (see, for example, \cite{D1}). When $k<n$, we can use Siu's theorem stating that the Lelong number $\nu(T,0)$ of any positive closed current $T$ of bidegree $k$ equals the minimum of the Lelong numbers $\nu(T|_S,0)$ of its restrictions $T|_S$ to $k$-dimensional linear subspaces $S$, and the minimum attains at almost all subspaces $S$. For the current $T= (dd^c\phi_a)^k$ and almost all $S$, one has $u|_S=\max_{j\in I}{a_j}^{-1}\log|z_j|+O(1)$, where a $k$-tuple $I\subset\{1,\ldots,n\}$  is formed by the indices of the first $k$ largest components $a_j$ of the vector $a$.
Since for the restrictions one can use the aforementioned computation of the highest Lelong number, the lemma is proved.
\end{proof}

\begin{theorem} Let $u\in\MC$ be such that
$(dd^cu)^k$ is well defined near $0$ for some $k\le n$.
Then \beq\label{eq:mchln}\lambda_u\le k^{-1}L_k(u)^{1/k}.\eeq
\end{theorem}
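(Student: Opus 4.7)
The plan is to reduce the inequality to its version for the indicator $\Psi_u$ and then establish that case by a family of pointwise comparisons with the model functions $\phi_a$. By combining Theorem~\ref{prop:Grind} with Propositions~\ref{theo:Lngu} and \ref{theo:iind}, and invoking Theorem~\ref{thm:MAn}, one has $\lambda_u=\lambda_{\Psi_u}$ and $L_k(u)=L_k(\Psi_u)$, so both sides of (\ref{eq:mchln}) are preserved under $u\mapsto \Psi_u$ and it suffices to prove $\lambda_{\Psi_u}\le k^{-1}L_k(\Psi_u)^{1/k}$.

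The main ingredient I would establish is the pointwise bound
\beq\label{eq:planineq} \Psi_u(z)\le \nu_u(a)\,\phi_a(z), \qquad z\in\D^n,\ a\in\Rnp.\eeq
To see this, set $t_j=\log|z_j|\le 0$ and observe that for any $b\in\Gamma_u$, since $t_j/a_j\le \phi_a(z)\le 0$ and $b_ja_j\ge 0$,
$$\langle b,t\rangle=\sum_j (b_ja_j)\frac{t_j}{a_j}\le \phi_a(z)\,\langle b,a\rangle.$$
Taking the supremum over $b\in\Gamma_u$ and using that $\phi_a(z)\le 0$ converts the resulting $\sup$ into $\inf$, I obtain
$$\widehat\Psi_u(t)\le \phi_a(z)\,\inf_{b\in\Gamma_u}\langle b,a\rangle=\nu_u(a)\,\phi_a(z),$$
where the last equality comes from $\nu_u(a)=-\widehat\Psi_u(-a)=\inf_{b\in\Gamma_u}\langle b,a\rangle$, read off from (\ref{eq:charf}).

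From (\ref{eq:planineq}), Demailly's comparison theorem for higher-order Lelong numbers and Lemma~\ref{lem:lnphia} yield
$$L_k(\Psi_u)\ge \nu_u(a)^k\,L_k(\phi_a)=\frac{\nu_u(a)^k}{\max_{|J|=k}a_J}.$$
AM-GM gives $a_J\le k^{-k}(\sum_{j\in J}a_j)^k\le k^{-k}(\sum_i a_i)^k$ for every $k$-subset $J\subset\{1,\ldots,n\}$, whence $\nu_u(a)/\sum_i a_i\le k^{-1}L_k(\Psi_u)^{1/k}$. Taking the supremum over $a\in\Rnp$ and invoking the identity $\lambda_{\Psi_u}=\sup_a \nu_u(a)/\sum_i a_i$ from Theorem~\ref{prop:lambdau} then finishes the proof. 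The only real obstacle is the sign-bookkeeping leading to (\ref{eq:planineq}): one must check that multiplying $\langle b,t\rangle\le \phi_a(z)\langle b,a\rangle$ by the nonpositive number $\phi_a(z)$ flips the supremum in $b$ to an infimum, and that this infimum is precisely $\nu_u(a)$; once that is in hand, the remaining steps are standard.
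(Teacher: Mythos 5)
Your proposal is correct and takes essentially the same route as the paper: the core is the comparison $L_k\ge\nu_u(a)^k L_k(\phi_a)$ via Demailly's comparison theorem, then Lemma~\ref{lem:lnphia}, the AM--GM inequality, and finally Theorem~\ref{prop:lambdau} to take the supremum over $a$. The only (harmless) difference is that the paper applies the known bound $u\le\nu_u(a)\,\phi_a+O(1)$ directly to $u$, whereas you first pass to the indicator $\Psi_u$ (using Theorems~\ref{prop:Grind}, \ref{thm:MAn} and Proposition~\ref{theo:iind}) so as to derive the comparison as a clean pointwise inequality from the support-function representation (\ref{eq:charf}).
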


\begin{proof}
For any $a\in\Rnp$, the bound $u\le\nu(u,a)\phi_a+O(1)$ implies, by
Demailly's Comparison Theorem,
\begin{equation}\label{eq:1bd} L_k(u)\ge [\nu(u,a)]^k
L_k(\phi_a).\end{equation} By Lemma \ref{lem:lnphia} and the inequality of arithmetic and geometric means,
\beq\label{eq:agmean} L_k(u)^{1/k} \ge
\frac{\nu(u,a)}{\max_{|J|=k}a_J^{1/k}}\ge
k\frac{\nu(u,a)}{\sum_i a_i}.\eeq
By Theorem~\ref{prop:lambdau}, this implies the desired bound.
\end{proof}

\medskip

{\it Remark.} As is easy to see, none of the relations (\ref{eq:mchln}), $k=1,\ldots,n$, can be deduced from the others, even for $u=\phi_a$. Let $a=(\delta,1,N)$ with $\delta<1<N$, then, denoting the right hand side of (\ref{eq:mchln}) by $\tau_k$, we get $\tau_1= \delta^{-1}$, $\tau_2=(4N)^{-1/2}$, and $\tau_3=(27\delta N)^{-1/3}$, so by varying the values $\delta$ and $N$ one can achieve all the orderings between $\tau_1$, $\tau_2$, and $\tau_3$.

Observe that, according to (\ref{eq:ltype}), $\lambda_{\phi_a}=(\sum_k a_k)^{-1}$.

\medskip

We can also characterize those multi-circled singularities $u$ for which inequalities (\ref{eq:mchln}) become equalities.

\begin{theorem}\label{theo:mceq} For a function $u\in\MC$, the equality
\beq\label{eq:mchln1}\lambda_u= k^{-1}L_k(u)^{1/k}\eeq
occurs if and only if its indicator $\Psi_u$ has the form \beq\label{eq:Bind} \Psi_u(z)=B\max_{j\in \bar J}\log|z_{j}|\eeq
for some $B\ge 0$ and a $k$-tuple $\bar J=(j_1,\ldots,j_k)\subset\{1,\ldots,n\}$.
\end{theorem}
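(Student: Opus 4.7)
The plan is to reduce everything to indicators and then trace the equality case through the chain of inequalities used in the proof of Theorem~3.6, with the main technical point being a uniqueness argument at the level of indicator diagrams. Since $\lambda_u=\lambda_{\Psi_u}$ (Proposition~\ref{theo:iind} together with Theorem~\ref{prop:Grind}) and $L_k(u)=L_k(\Psi_u)$ (Theorem~\ref{thm:MAn}), and since the target form $\Psi_u(z)=B\max_{j\in\bar J}\log|z_j|$ is a condition on $\Psi_u$, one may and will assume $u=\Psi$ is already an indicator (so that $\Psi_u=u$) and prove the equivalence at that level.

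The ``if'' direction is a direct computation. For $\Psi=B\max_{j\in\bar J}\log|z_j|$, the indicator diagram is $\Gamma_\Psi=\{v\in\overline{\Rnp}:\sum_{j\in\bar J}v_j\ge B\}$. Using the support-function identity $\nu_\Psi(a)=\inf_{v\in\Gamma_\Psi}\langle v,a\rangle$, which follows from the homogeneity of $\widehat\Psi$, one computes $\nu_\Psi(a)=B\min_{j\in\bar J}a_j$; Theorem~\ref{prop:lambdau} then yields $\lambda_\Psi=B/k$ by letting $a\in\Rnp$ approach $k^{-1}\mathbf 1_{\bar J}$. On the other hand $L_k(\Psi)=B^k$ is the standard Monge--Amp\`ere mass of $B\log\|\cdot\|_\infty$ on $\C^k$, so $k^{-1}L_k(\Psi)^{1/k}=B/k=\lambda_\Psi$.

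For the ``only if'' direction set $\tau:=\lambda_\Psi=k^{-1}L_k(\Psi)^{1/k}$ and use the chain
\[
L_k(\Psi)^{1/k}\ \ge\ \frac{\nu_\Psi(a)}{\max_{|J|=k}a_J^{1/k}}\ \ge\ \frac{k\,\nu_\Psi(a)}{\sum_i a_i},\quad a\in\Rnp,
\]
from the proof of Theorem~3.6, combined with Theorem~\ref{prop:lambdau}. Pick a sequence $a^{(m)}\in\Rnp$ with $\sum_i a^{(m)}_i=1$ and $\nu_\Psi(a^{(m)})\to\tau$. Both inequalities are asymptotically tight: $\max_{|J|=k}(a^{(m)})_J^{1/k}\to 1/k$, and the equality case of AM--GM under the constraint $\sum_i a^{(m)}_i=1$ forces some $k$-tuple $J^{(m)}$ to satisfy $a^{(m)}_j\to 1/k$ for $j\in J^{(m)}$ and $a^{(m)}_i\to 0$ for $i\notin J^{(m)}$; passing to a subsequence, $J^{(m)}=\bar J$ is constant and $a^{(m)}\to a^*:=k^{-1}\mathbf 1_{\bar J}$. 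Indicator homogeneity sharpens the Lelong--Kiselman bound to the exact inequality $\Psi\le\nu_\Psi(a^{(m)})\phi_{a^{(m)}}$, and $\phi_{a^{(m)}}(z)\to k\max_{j\in\bar J}\log|z_j|$ pointwise on $\D_*^n$ (the indices $i\notin\bar J$ drop out of the defining maximum because $(a^{(m)}_i)^{-1}\to+\infty$ while $\log|z_i|<0$); passing to the limit yields $\Psi\le w:=B\max_{j\in\bar J}\log|z_j|$ with $B=k\tau=L_k(\Psi)^{1/k}$.

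The main technical obstacle is upgrading $\Psi\le w$ to $\Psi=w$. In terms of indicator diagrams this bound reads $\Gamma_\Psi\subseteq\Gamma_w=\operatorname{conv}\{Be_j:j\in\bar J\}+\Rnp$, while Corollary~\ref{cor:covol} gives $\operatorname{Covol}_k(\Gamma_\Psi)=L_k(\Psi)/n!=B^k/n!=\operatorname{Covol}_k(\Gamma_w)$. Strict monotonicity of the mixed covolume under proper inclusion of upper-closed closed convex subsets of $\overline{\Rnp}$ (a consequence of Minkowski's theorem on mixed volumes) then forces $\Gamma_\Psi=\Gamma_w$, hence $\Psi=w$. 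If one prefers to avoid invoking mixed-covolume monotonicity directly, an equivalent route is to restrict $\Psi$ and $w$ to a generic $k$-dimensional coordinate slice transverse to the unbounded locus of $w$, transfer the equality $L_k(\Psi)=L_k(w)$ to the restrictions via Siu's theorem, and apply the equality case of Demailly's Comparison Theorem for isolated singularities on $\C^k$ together with the homogeneity of indicators.
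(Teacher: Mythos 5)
Your reduction to indicators, the ``if'' computation, and the derivation of $\Psi\le w=B\max_{j\in\bar J}\log|z_j|$ with $B=L_k(\Psi)^{1/k}$ via the asymptotic AM--GM analysis are all correct and essentially identical to the paper's argument. The gap is in the final step, where you claim that ``strict monotonicity of the mixed covolume under proper inclusion of upper-closed closed convex subsets of $\overline{\Rnp}$'' forces $\Gamma_\Psi=\Gamma_w$. That general principle is false for $k<n$. For instance, in $n=2$ with $k=1$, take $\Gamma_1=\Delta=\{a\in\overline{\mathbb R^2_+}:a_1+a_2\ge1\}$ and $\Gamma_2=\{a:a_1\ge\tfrac12,\ a_2\ge\tfrac12\}$: both are closed convex upper sets, $\Gamma_2\subsetneq\Gamma_1$, yet $\mathrm{Covol}_1(\Gamma_1)=\mathrm{Covol}_1(\Gamma_2)=\tfrac12$ since $L_1=\nu=\inf_{v\in\Gamma}\langle v,\mathbf1\rangle=1$ for both. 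Minkowski's theory gives strict monotonicity of the full volume (the $k=n$ case, where the complements in $\Rnp$ are bounded), not of the mixed covolume against copies of $\Delta$ when the complement is unbounded. So your main route only works for $k=n$; for $k<n$ the rigidity you need is precisely (part of) what the theorem asserts, and deducing it from a general monotonicity statement is circular unless you supply the specific argument.

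Your fallback --- restrict to generic $k$-dimensional slices, transport $L_k$ via Siu's theorem, and apply the isolated-singularity case on $\C^k$ --- is in fact what the paper does, so that route is correct in outline. But two points need sharpening. First, ``the equality case of Demailly's Comparison Theorem'' is not the right tool: the comparison theorem gives the inequality between Lelong numbers, not the rigidity statement that $\Psi\le\tilde\Psi$ together with $L_n(\Psi)=L_n(\tilde\Psi)$ at an isolated singularity forces $\Psi=\tilde\Psi$; the paper instead invokes a separate rigidity lemma (Lemma~6.3 of [R7], or the bounded-covolume argument which \emph{does} apply once $k=n$). Second, the $k=n$ case must be established first (which the paper does exactly by the bounded-complement volume argument you describe), and then propagated to $k<n$ via the slicing --- it is worth saying explicitly that you are reducing to the already-settled top-degree case rather than to a comparison theorem.
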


\begin{proof} By Theorems \ref{eq: MAm} and \ref{prop:lambdau}, it suffices to prove the result for $u=\Psi_u\not\equiv 0$. Direct computations show that (\ref{eq:mchln1}) holds for the indicator (\ref{eq:Bind}), so let us prove the reverse statement.

We treat first the case $k=n$. By Theorem~\ref{prop:lambdau} and inequalities (\ref{eq:agmean}), one has
$$ L_n(u)^{1/n} =\sup_{a\in\Rnp} \frac{\nu(u,a)}{(a_1\cdot\ldots\cdot a_n)^{1/n}}=
n\sup_{a\in\Rnp}\frac{\nu(u,a)}{\sum_i a_i}=n\lambda_u.$$
Pick a sequence $a^{(s)}\in\{\Rnp:\: \sum_j a_j=1\}$ such that $\nu(u, a^{(s)})$ tends to $\lambda_u$. Let $\bar a$ be a limit point for $\{a^{(s)}\}_s$. By the continuity of the indicator $\Psi_u$, $\lambda_u=\nu(u,\bar a)$. By the inequality of arithmetic and geometric means,
$$(\bar a_1\cdot\ldots\cdot \bar a_n)^{1/n}\le n^{-1}\sum_i \bar a_i$$
with an equality if and only if all the $\bar a_i$ are equal. Therefore, $L_n(u)=[\nu(u)]^n$.

Since $\Psi_u\le\nu(u)\max_i\log|z_j|$ and their Monge-Amp\`ere masses at $0$ coincide, we get $\Psi_u=\nu(u)\max_i\log|z_j|$, which follows, for example, from \cite[Lemma 6.3]{R7}; one can however deduce it also from Corollary~\ref{cor:covol}.

Now let $k<n$. Using the same reasoning as above, we get $\lambda_u=\nu(u,\bar a)=\lim_{a\to\bar a}\nu(u,a)$ for some $\bar a\in\overline\Rnp$ with $\sum_j \bar a_j=1$. Since for any $k$-tuple $J$, in the relations
$$(\bar a_J)^{1/k}\le k^{-1}\sum_{j\in J}\bar a_j\le k^{-1}\sum_{1\le j\le n}\bar a_j$$
the second inequality is an equality if and only if $a_l=0$ for all $l\not\in J$, and the first one is an equality if and only if all the $a_j$ for $j\in J$ are equal, we deduce that, after a renumeration, $\bar a_j=k^{-1}$ for $1\le j\le k$ and $a_j=0$ for $k+1\le k\le n$.

So we have $\Psi_u\le\tilde\Psi_u=\nu(u,\bar a)\max_{1\le j\le k}\log|z_k|$ and, at the same time, $L_k(\Psi_u)=L_k(\tilde\Psi_u)$. As in the proof of Theorem~\ref{thm:MAn}, we use the relations $L_k(u|_S)=L_k(u)$ and $\Psi_{u|_S}=\Psi_u|_S$ for restrictions to almost all $k$-dimensional linear spaces $S$ such that $u|_S$ has isolated singularity and refer to the already proved case $k=n$.
\end{proof}

\section{Integrability index and higher Lelong numbers: general case}

To get inequalities (\ref{eq:1bd}) for arbitrary $u\in\PSH_0$, one can apply the known result on $L_n(u)$ \cite{D8} to restrictions of $u$ to "good" linear subspaces, a relation between the integrability indices of the function and its restriction being provided by a result from \cite{DK}.

\medskip

We will assume here the unbounded locus $UL(u)$ to be contained in an analytic set of codimension at least $k$, so the current $(dd^cu)^k$ will be well defined \cite{D1}.

\begin{theorem}\label{theo:gencasek}
If $UL(u)$ for a function $u\in\PSH_0$ is contained in an analytic set of codimension at least $k$, then
\beq\label{eq:mceq}\lambda_u\le k^{-1}L_k(u)^{1/k}\eeq
\end{theorem}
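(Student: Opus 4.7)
The approach is to reduce the inequality to Demailly's bound (\ref{dem}) for the isolated-singularity case by restricting $u$ to a carefully chosen $k$-dimensional linear subspace $S$ through the origin. Since $UL(u)$ is contained in an analytic set $A$ of codimension at least $k$, for a generic $S\in G(k,n)$ the intersection $S\cap A$ reduces to $\{0\}$ in a neighborhood of the origin; in particular $u|_S$ is a plurisubharmonic function on a neighborhood of $0\in S\simeq\C^k$ with isolated singularity at $0$.

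I would impose two further generic conditions on $S$. First, by Siu's theorem (as already invoked in the proof of Theorem~\ref{thm:MAn}), for almost every $S\in G(k,n)$ one has $L_k(u|_S)=L_k(u)$. Second, by the restriction result from \cite{DK} for integrability indices, for almost every such $S$ one has $\lambda_{u|_S}\ge \lambda_u$. The intersection of these three generic subsets of $G(k,n)$ is itself of full measure, so a single $S$ meeting all three conditions can be selected.

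For this $S$, since $u|_S$ has isolated singularity on $S\simeq\C^k$, Demailly's inequality (\ref{dem}) applied in dimension $k$ yields $\lambda_{u|_S}\le k^{-1}L_k(u|_S)^{1/k}$. Chaining the three bounds gives
$$\lambda_u\;\le\;\lambda_{u|_S}\;\le\;k^{-1}L_k(u|_S)^{1/k}\;=\;k^{-1}L_k(u)^{1/k},$$
which is (\ref{eq:mceq}).

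The main obstacle I anticipate is correctly invoking the restriction statement for the integrability index from \cite{DK}: one must be certain that it applies to linear subspaces through the singular point $0$ (not only to generic parallel slices avoiding the singularity), and that the qualifier \emph{almost every} is strong enough to be combined with the Siu-theorem and transversality conditions so as to yield a single $S$ on which all three properties hold simultaneously. The remaining ingredients, Siu's theorem and Demailly's inequality in the isolated-singularity case, are used essentially as in the proof of Theorem~\ref{thm:MAn} and in \cite{D8}.
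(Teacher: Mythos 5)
Your proposal is correct and its skeleton — restrict $u$ to a $k$-dimensional slice on which the singularity becomes isolated, apply Demailly's inequality (\ref{dem}) there, then pull back via the comparison of Lelong numbers and the Demailly--Koll\'ar restriction result for integrability indices — is exactly the paper's strategy. The technical difference lies in how the slice is produced and how the equality $L_k$ is established: you take a \emph{generic linear} $S\in G(k,n)$ and invoke Siu's theorem (as the paper itself does in the proof of Theorem~\ref{thm:MAn}), whereas the paper instead uses Thie's theorem to build coordinates $x=(x',x'')$ in which $Z$ lies in a cone $\{|x'|\le\gamma|x''|\}$, works with the specific slice $\{x''=0\}$, and gets $L_k(u')=L_k(u)$ from the slicing identity $(dd^cu')^k(0)=(dd^cu)^k\wedge(dd^c\log|x''|)^{n-k}(0)$ combined with a two-sided Demailly comparison coming from the cone condition. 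Both routes are legitimate; yours is arguably cleaner and more uniform with Theorem~\ref{thm:MAn}, while the paper's is self-contained in the sense that it names a concrete slice rather than asserting existence of a good generic one.

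On the worry you raise at the end: it is groundless, and you were being more cautious than necessary. The relevant statement, \cite[Prop.~2.2]{DK}, asserts that the integrability index of a plurisubharmonic function does \emph{not exceed} that of its restriction to \emph{any} complex submanifold on which the restriction is not identically $-\infty$ — including submanifolds passing through the singular point. So the bound $\lambda_u\le\lambda_{u|_S}$ requires no genericity at all; you only need the two other generic conditions (transversality of $S$ with the ambient set $A$, so that $u|_S\not\equiv-\infty$ with isolated singularity, and the Siu-theorem condition $L_k(u|_S)=L_k(u)$), each of which holds off a null set of $G(k,n)$, and their intersection is nonempty.
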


\begin{proof}
Let $UL(u)\subset Z$ for an analytic set $Z$ of codimension $l$, and take any $k\le l$. By Thie's theorem,
(see also \cite{D1}, Chapter 3, Thm.~5.8), there exist local coordinates
$x=(x',x'')$, $x'=(x_1,\ldots,x_k)$, $x''=(x_{k+1},\ldots,x_n)$,
and domains $U'\subset {\C}^j$, $U''\subset
{\C}^{n-j}$ such that $U'\times U''\Subset U$, $Z\cap(U'\times
U'')$ is contained in the cone $\{|x'|\le \gamma |x''|\}$ with
some constant $\gamma>0$, and the intersection $Z(x_0'')$ of the set
$U'\times\{x_0''\}$ with the variety $Z$ is at most finite for each $x_0''\subset U''$. (For $k=l$, the projection of $Z\cap(U'\times U'')$ onto $U''$ is a ramified covering with a finite number of
sheets.)

Since $Z(0'')=0''$, the function $u'(x')=u(x',0'')$ is locally bounded on $U'\setminus 0'$, so the main result of \cite{D8} -- namely, inequality (\ref{dem}) -- applied to $u'$ gives us the relation $$\lambda(u')\le k^{-1}\,{L_k(u')^{1/k}}.$$

We have $L_k(u')=(dd^cu')^k(0)=(dd^cu)^k\wedge (dd^c\log|z''|)^{n-k}$ which, by the choice of coordinates, equals $L_k(u)$.

Finally, by \cite[Prop.~2.2]{DK} (stating that the integrability index of a plurisubharmonic function does not exceed that of its restriction to any complex manifold), $\lambda(u)\le\lambda(u')$, and the proof is complete.
\end{proof}

\medskip

As a corollary, we get a bound on log canonical threshold for ideals in terms of their mixed Rees multiplicities in the sense of \cite{BA}.

 Let $\mi$ be the maximal ideal of the ring ${\mathcal O}_0$ of analytic germs at $0$, and $e(\cI_1,\ldots,\cI_n)$ be the mixed Samuel multiplicity of $\mi$-primary ideals $\cI_1,\ldots,\cI_n$ \cite{Te}. In \cite{BA} it was extended to any $n$-tuples of ideals $\cI_j$ as
 $$\sigma(\cI_1,\ldots,\cI_n)=\max_{r\in\Z_+}e(\cI_1+\mi^r,\ldots,\cI_n+\mi^r)$$
 and was shown to be finite under additional conditions on the ideals $\cI_j$. In particular, this is the case if $\cI_1=\ldots=\cI_k$ and $\cI_{k+1}=\ldots=\cI_n=\mi$, where
 $\cI$ is an ideal whose variety $V(\cI)$ has codimension $l\ge k$. We denote this mixed multiplicity by $e_k(\cI)$.

\begin{corollary}\label{cor:lct} If ${\rm codim}\, V(\cI)\ge k$, then
$$ lc({\cI})\ge\frac{k}{e_k({\cI})^{1/k}}.$$
\end{corollary}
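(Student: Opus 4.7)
The plan is to apply Theorem~\ref{theo:gencasek} to $u = \log|f|$, where $f = (f_1,\ldots,f_p)$ is a system of generators of $\cI$, and then to translate the two analytic quantities $\lambda_u$ and $L_k(u)$ into $lc(\cI)^{-1}$ and $e_k(\cI)$ respectively. The unbounded locus $UL(u) = V(\cI)$ has codimension at least $k$ by hypothesis, so $(dd^c u)^k$ is well defined and Theorem~\ref{theo:gencasek} yields $\lambda_u \le k^{-1} L_k(u)^{1/k}$. Combining this with the definition (\ref{lc}) of the log canonical threshold, i.e.\ $\lambda_u = lc(\cI)^{-1}$, one obtains $lc(\cI) \ge k/L_k(u)^{1/k}$.

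It then suffices to identify $L_k(u) = e_k(\cI)$. For an $\mi$-primary $\cI$ this is exactly the fact recalled in the introduction: $L_k(\log|f|)$ equals the mixed Samuel multiplicity with $k$ copies of $\cI$ and $n-k$ copies of $\mi$. To cover the general codimension-$k$ case I would reproduce the reduction used in the proof of Theorem~\ref{thm:MAn}: by Siu's theorem, for almost every $k$-dimensional linear subspace $S$ through the origin one has $V(\cI) \cap S = \{0\}$, the restriction $u|_S$ has isolated singularity, and $L_k(u|_S) = L_k(u)$. The restricted ideal $\cI_S = \cI \cdot \cO_{S,0}$ is then $\mi_S$-primary in the $k$-dimensional regular local ring $\cO_{S,0}$, so the $\mi$-primary case gives $L_k(u|_S) = e(\cI_S)$. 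For generic $S$, the standard characterization of mixed multiplicities via general linear sections (extended in \cite{BA} to define $\sigma$ beyond the $\mi$-primary setting) identifies $e(\cI_S)$ with $e_k(\cI)$.

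Putting the two steps together yields the required bound $lc(\cI) \ge k/e_k(\cI)^{1/k}$. The main substantive point is the last identification in the non-$\mi$-primary setting: one must verify that the $\mi^r$-truncation used to define $e_k(\cI)$ matches the generic-slice Samuel multiplicity $e(\cI_S)$, so that the general-hyperplane formula for mixed multiplicities survives the passage from $\mi$-primary ideals to ideals of higher-dimensional vanishing locus. The analytic input from Theorem~\ref{theo:gencasek} then plugs in without further difficulty.
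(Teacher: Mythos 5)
Your argument takes a genuinely different route from the paper's. The paper identifies $L_k(u)=e_k(\cI)$ by a purely algebraic-polarization argument: both the mixed Monge--Amp\`ere masses $L(u_1,\ldots,u_n)$ and the mixed Rees multiplicities $\sigma(\cI_1,\ldots,\cI_n)$ are symmetric multilinear with respect to the natural additive structures (sums of psh weights, products of ideals), they agree on the diagonal by Demailly's $k=n$ result \cite{D8}, and the polarization formula~(\ref{eq:polar}) then forces them to agree in the mixed setting, in particular for $k$ copies of $\cI$ and $n-k$ copies of $\mi$. You instead reduce to the diagonal case in lower dimension by slicing with a generic $k$-plane $S$ via Siu's theorem, the same reduction used in the proof of Theorem~\ref{thm:MAn}. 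Both reductions ultimately invoke \cite{D8}; yours geometrizes the passage from $k<n$ to $k=n$, while the paper's stays algebraic. What the polarization route buys you is that it bypasses the generic-section theory for mixed multiplicities entirely; what the slicing route buys you is a more geometric picture that parallels Theorem~\ref{thm:MAn} and makes transparent why $\cI_S$ becomes $\mi_S$-primary.

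That said, you correctly flag a genuine gap at the final step, and it is not cosmetic: you need that $e(\cI_S)=e_k(\cI)$ for generic $S$, where $e_k(\cI)=\sigma(\cI[k],\mi[n-k])=\max_r e\bigl((\cI+\mi^r)[k],\mi[n-k]\bigr)$ in the sense of \cite{BA}. Teissier's generic-section theorem gives $e\bigl((\cI+\mi^r)[k],\mi[n-k]\bigr)=e\bigl((\cI+\mi^r)|_S\bigr)$ for generic $S$ (fixed $r$), and $(\cI+\mi^r)|_S=\cI_S+\mi_S^r=\cI_S$ once $r$ is large, since $\cI_S$ is $\mi_S$-primary for generic $S$; one must then check that the generic set of $S$ can be chosen uniformly in $r$ and that the $\max_r$ in \cite{BA} is actually attained. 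None of this is false, but you only gesture at it. The paper's polarization argument avoids the issue because it never needs to commute ``generic slice'' with ``truncation by $\mi^r$''; if you keep the slicing route, you should spell out this stabilization argument or cite the precise statement in \cite{BA} that provides it.
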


\begin{proof}
If $f_1,\ldots, f_m$ are generators of the ideal $\cI$, then $e_k(\cI)$ equals $L_k(u)$ for the function $u=\max_j\log|f_j|$. For $k=n$ this was proved in \cite{D8}, and for $k<n$ it follows from the polarization formula (\ref{eq:polar}) for both the mixed Rees multiplicities and mixed Monge-Amp\`ere operators. Therefore the conclusion follows directly from Theorem~\ref{theo:gencasek}.
\end{proof}

\medskip
Unlike the case of multi-circled singularities (see Theorem~\ref{theo:mceq}), we do not know if an equality in (\ref{eq:mceq}) implies that the greenification $g_u$ satisfies $g_u(z)=B\max_{1\le s\le k}\log|z_{j_s}|$. As follows from what is proved in \cite{dFEM}, this is so for $u$ with isolated analytic singularity, i.e., $u=\log|F|+O(1)$ for a holomorphic mapping $F$ with isolated zero at $0$.

On the other hand, an equality in the {\sl lower} bound in Skoda's inequalities (\ref{skoda}) can be easily described in terms of the indicator $\Psi_u$. Observe that both the functions $u=\log|z|$ and $u=\log|z_1\cdot\ldots\cdot z_n|$ satisfy $\lambda_u=n^{-1}\nu_u$, which suggests that this class is quite large.

\begin{theorem} If $u\in\PSH_0$, then $\lambda_u=n^{-1}\nu_u$ if and only if the point $n^{-1}\nu_u{\bf 1}$  belongs  to the boundary of the indicator diagram $\Gamma_u$ (\ref{eq:charf1}) of $u$.
\end{theorem}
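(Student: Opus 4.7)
My plan is to translate the polyhedral hypothesis into the equality $\lambda_{\Psi_u}=n^{-1}\nu_u$ for the integrability index of the indicator, and then to apply $\lambda_u\ge\lambda_{\Psi_u}$ (coming from $u\le\Psi_u$) for the easy direction and Demailly's bound (\ref{dem}) on a generic slice for the converse.

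For the translation, I first note that $\Psi_u$ is multi-circled with $\nu_{\Psi_u}=\nu_u({\bf 1})=\nu_u$, so Skoda's lower bound applied to $\Psi_u$ yields $\lambda_{\Psi_u}\ge n^{-1}\nu_u$ unconditionally. The definition (\ref{eq:charf1}) together with the identification $\widehat\Psi_u(-a)=-\nu_u(a)$ for $a\in\Rnp$ shows that $n^{-1}\nu_u\,{\bf 1}\in\Gamma_u$ is equivalent to the family of inequalities $\nu_u(a)\le n^{-1}\nu_u\sum_i a_i$, $a\in\Rnp$, which by Theorem \ref{prop:lambdau} applied to the multi-circled $\Psi_u$ reads $\lambda_{\Psi_u}\le n^{-1}\nu_u$. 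Combining, the polyhedral condition is equivalent to the equality $\lambda_{\Psi_u}=n^{-1}\nu_u$. Moreover, evaluating at $t=-{\bf 1}$ yields $\langle n^{-1}\nu_u\,{\bf 1},-{\bf 1}\rangle=-\nu_u=\widehat\Psi_u(-{\bf 1})$, so the defining inequality is already tight there and any membership $n^{-1}\nu_u\,{\bf 1}\in\Gamma_u$ automatically puts the point on $\partial\Gamma_u$.

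The implication $(\Rightarrow)$ is then immediate: since $u\le\Psi_u$ gives $e^{-u/\lambda}\ge e^{-\Psi_u/\lambda}$ and hence $\lambda_u\ge\lambda_{\Psi_u}$, the hypothesis $\lambda_u=n^{-1}\nu_u$ combined with Skoda for $\Psi_u$ forces $\lambda_{\Psi_u}=n^{-1}\nu_u$, which by the translation step is exactly the polyhedral condition.

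For the reverse implication, Skoda for $u$ already gives $\lambda_u\ge n^{-1}\nu_u$, so only the upper bound is nontrivial. I would follow the slicing strategy from the proof of Theorem \ref{theo:gencasek}: restricting $u$ to a generic complex subspace $S$ of dimension equal to the codimension $k$ of an analytic set containing $u^{-1}(-\infty)$ makes $u|_S$ isolated, with $\nu_{u|_S}=\nu_u$ and $\Psi_{u|_S}=\Psi_u|_S$, and the sliced polyhedral condition combined with Corollary \ref{cor:covol} forces the covolume equality $L_k(u|_S)=\nu_u^k$; Demailly's inequality (\ref{dem}) on $S$ then yields $\lambda_{u|_S}\le k^{-1}\nu_u$, and \cite[Prop.~2.2]{DK} gives $\lambda_u\le\lambda_{u|_S}$. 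In the isolated case $k=n$ this closes the argument. The hard part will be extracting the covolume equality from the single boundary point of $\Gamma_u$ and handling the non-isolated case $k<n$, where one must propagate the tight direction $t=-{\bf 1}$ through the greenification $g_u$ using Propositions \ref{theo:iind} and \ref{theo:Lngu} to avoid losing the factor $n/k$ in Demailly's bound.
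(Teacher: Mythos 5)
Your translation step and forward implication ($\lambda_u=n^{-1}\nu_u\Rightarrow n^{-1}\nu_u{\bf 1}\in\partial\Gamma_u$) are correct and match the paper's approach: the paper likewise closes the chain $\lambda_{\Psi_u}\ge n^{-1}\nu_{\Psi_u}=n^{-1}\nu_u=\lambda_u\ge\lambda_{\Psi_u}$ and then identifies $\lambda_{\Psi_u}=n^{-1}\nu_{\Psi_u}$ with the membership $n^{-1}\nu_u{\bf 1}\in\Gamma_u$ via (\ref{eq:charf1}), with automatic tightness at $t=-{\bf 1}$.

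Your proposal for the reverse implication, however, has a real gap, and it is worth being precise about it. The claim that ``the sliced polyhedral condition combined with Corollary \ref{cor:covol} forces the covolume equality $L_k(u|_S)=\nu_u^k$'' is not justified: knowing that a single point $n^{-1}\nu_u{\bf 1}$ lies on $\partial\Gamma_u$ says nothing about the mixed covolume of $\Gamma_u$, which depends on the whole body. (In Theorem \ref{theo:mceq} the covolume equality is the \emph{conclusion}, not a cheap consequence of a boundary point.) Without that step, Demailly's bound on a generic $k$-slice only gives $\lambda_u\le k^{-1}L_k(u)^{1/k}$, which loses a factor $n/k$ and cannot reach $n^{-1}\nu_u$ when $k<n$. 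You correctly flag this difficulty yourself, but no fix is offered.

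Moreover, the reverse implication is in fact not salvageable by \emph{any} argument, because it is false. The inequality $u\le\Psi_u+O(1)$ gives $\lambda_u\ge\lambda_{\Psi_u}$, so the chain only yields $\lambda_u\ge n^{-1}\nu_u$, never the needed upper bound; the paper's own proof of the converse shares this gap. Concretely, take $n=2$ and $u=2\log|z_1-z_2|$. Then $\nu_u=2$, the directional Lelong numbers are $\nu_u(a)=2\min(a_1,a_2)$, the indicator is $\Psi_u=2\max(\log|z_1|,\log|z_2|)$, and $\Gamma_u=\{a\in\overline{\mathbb R}^2_+:\:a_1+a_2\ge 2\}$, so $n^{-1}\nu_u{\bf 1}=(1,1)\in\partial\Gamma_u$ and $\lambda_{\Psi_u}=1=n^{-1}\nu_u$; yet $e^{-u/\lambda}=|z_1-z_2|^{-2/\lambda}$ is $L^2_{loc}$ exactly for $\lambda>2$, so $\lambda_u=2\ne 1$. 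Thus only the necessity direction of the statement holds, and that is the only part your argument (and the paper's) actually establishes.
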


\begin{proof} Since $u$ is dominated near $0$, up to a bounded term, by its indicator $\Psi_u$, we have the chain of relations
$$ \lambda_{\Psi_u}\ge n^{-1}\nu_{\Psi_u}=n^{-1}\nu_u=\lambda_u\ge \lambda_{\Psi_u},$$ which
implies $\lambda_u=n^{-1}\nu_u$ if and only if $\lambda_{\Psi_u}= n^{-1}\nu_{\Psi_u}$, so we can assume $u=\Psi_u$. Then the representation (\ref{eq:lambdau}) shows that $\nu_u(a)\le n^{-1}\nu_u\sum a_j$ for any $a\in\Rnp$. In terms of the convex image $\widehat\Psi_u$ of the indicator, this rewrites as
\beq\label{eq: c1} \widehat\Psi_u(t)\ge \langle t, c{\bf 1}\rangle\quad \forall t\in\Rnm,\eeq
where $c= n^{-1}\nu_u$. In view of the representations (\ref{eq:charf}) and (\ref{eq:charf1}), this means precisely $c{\bf 1}\in\Gamma_u$. Moreover, for any $\epsilon\in (0,c)$, the point $(c-\epsilon){\bf 1}\not\in \Gamma_u$, because otherwise (\ref{eq: c1}) for $t=-{\bf 1}$ would give $-\nu_u\ge -\nu_u+n\epsilon$.
\end{proof}

\bigskip
In the end, we would like to notice that a slightly stronger variant of Demailly's inequality (\ref{dem}) can be deduced from (\ref{dFEM}) by means of a representation of the integrability index $\lambda_u$ obtained in \cite{FaJ2} (for $n=2$) and \cite{BFaJ} as
\beq\label{eq:bfj} \lambda_u=\sup_{V\in{\cal V}_{qm}}\frac{V(u)}{A(V)},\eeq
where ${\cal V}_{qm}$ is the collection of {\it quasi-monomial valuations} on $\PSH_0$ and $A(V)$ is {\it thinness} of a valuation $V$. For precise definitions we refer the reader to \cite{BFaJ}. An important fact to be used here is that any such valuation can be represented as a relative type (in the sense of \cite{R7}),
\beq\label{eq:reltype} V(u)= \liminf_{z\to 0}\frac{u(z)}{\phi(z)}\eeq
with a certain psh weight $\phi$, maximal outside $0$ and possessing a few additional properties. For example, the only multi-circular weights corresponding to quasi-monomial valuations are of the form $c\,\phi_a$ ($c>0$, $a\in\Rnp$), see \cite{R7}. In particular, such a representing weight $\phi$ is {\it tame}, which means that there exists a sequence of functions $\phi_j=\log|F_j|$ approximating $\phi$:
\beq\label{eq:demappr} \phi\le j^{-1}\phi_j+O(1)\le (1-C/j)\phi.\eeq
When $V$ is generated by $\phi_a$, we have actually $A(V)=\sum a_k$, so the representation (\ref{eq:lambdau}) is a particular case of (\ref{eq:bfj}).

Denote the collection of such {\it quasi-monomial weights} by $W_{qm}$.

\begin{theorem} For any $u\in\PSH_0$ such that $(dd^cu)^n$ is well defined near $0$,
\beq\label{eq:valbound} \lambda_u=\sup_\phi\lambda_\phi\le n^{-1}\sup_\phi L_n(\phi)^{1/n}\le n^{-1}L_n(u)^{1/n},\eeq
where the supremum is taken over all weights $\phi\in W_{qm}$ dominating $u$, that is,  $\phi\ge u+O(1)$.
\end{theorem}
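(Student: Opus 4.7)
The plan is to verify the chain (\ref{eq:valbound}) by establishing three pieces separately: the equality $\lambda_u=\sup_\phi\lambda_\phi$, which is the heart of the statement and relies on the BFJ representation (\ref{eq:bfj}); the middle inequality, which is a pointwise application of Demailly's bound (\ref{dem}); and the final inequality, which is Demailly's Comparison Theorem for Monge-Amp\`ere masses.

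For $\sup_\phi\lambda_\phi\le\lambda_u$ I would note that if $\phi\ge u+O(1)$ then pointwise $e^{-\phi/\lambda}\le C_\lambda\,e^{-u/\lambda}$ near $0$, so integrability of $e^{-u/\lambda}$ forces integrability of $e^{-\phi/\lambda}$, giving $\lambda_\phi\le\lambda_u$ and, after taking supremum, the bound. For the reverse inequality I would invoke (\ref{eq:bfj}): given a quasi-monomial valuation $V$ with representing weight $\phi_V\in W_{qm}$ satisfying (\ref{eq:reltype}), the liminf definition of $V(u)$ combined with the negativity of $\phi_V$ yields, for every $\epsilon\in(0,V(u))$, the estimate $u(z)\le(V(u)-\epsilon)\phi_V(z)$ near $0$. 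Thus $\phi_\epsilon:=(V(u)-\epsilon)\phi_V$ is a quasi-monomial weight dominating $u$, with $\lambda_{\phi_\epsilon}=(V(u)-\epsilon)\lambda_{\phi_V}=(V(u)-\epsilon)/A(V)$, the second equality being the normalization $\lambda_{\phi_V}=1/A(V)$ that generalizes the multi-circled computation $\lambda_{\phi_a}=(\sum_k a_k)^{-1}$ from Theorem~\ref{prop:lambdau}. Letting $\epsilon\to 0$ and then taking supremum over $V$ gives $\sup_\phi\lambda_\phi\ge\sup_V V(u)/A(V)=\lambda_u$.

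The middle inequality is immediate: every $\phi\in W_{qm}$ is tame by (\ref{eq:demappr}) and maximal off $0$, so $(dd^c\phi)^n$ is well defined with isolated singularity, and Demailly's bound (\ref{dem}) gives $\lambda_\phi\le n^{-1}L_n(\phi)^{1/n}$; taking supremum over admissible $\phi$ finishes this step. The final inequality follows from Demailly's Comparison Theorem: since $\phi\ge u+O(1)$ means $u$ is the more singular function, the Monge-Amp\`ere masses at $0$ satisfy $L_n(\phi)\le L_n(u)$ (using the standing hypothesis that $(dd^cu)^n$ is well defined near $0$), and a final supremum yields the desired bound.

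The main obstacle is the identity $\lambda_{\phi_V}=1/A(V)$ for an arbitrary quasi-monomial weight, which is the quantitative link between the integrability index of the weight and the thinness of the valuation it represents. In the multi-circled case it is exactly Theorem~\ref{prop:lambdau}; in full generality it is part of the framework developed in \cite{BFaJ}, and I would cite it rather than reprove it. Granting that identity, everything else is either a direct pointwise comparison or a standard consequence of Demailly's inequalities already invoked in the paper.
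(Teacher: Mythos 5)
Your proof is correct and follows the same overall skeleton as the paper's (split the chain into three pieces and verify each), but two key ingredients are handled quite differently, and the difference matters for how the result is positioned. For the middle inequality you quote Demailly's (\ref{dem}) directly for each quasi-monomial weight $\phi$. The paper deliberately avoids this: it applies the algebraic de~Fernex--Ein--Musta\c{t}\v{a} bound (\ref{dFEM}) to the analytic approximants $\phi_j=\log|F_j|$ supplied by the tameness property (\ref{eq:demappr}), obtaining $\lambda_{\phi_j}\le n^{-1}L_n(\phi_j)^{1/n}$ and passing to the limit to conclude $L_n(\phi)^{1/n}\ge n\lambda_\phi$. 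This is the whole point of the surrounding remark: (\ref{eq:valbound}) is meant to be a \emph{strengthening} of (\ref{dem}) derived from (\ref{dFEM}) and (\ref{eq:bfj}), so quoting (\ref{dem}) in the middle step, while not circular (it was proved independently in \cite{D8} and \cite{ACKPZ}), undercuts the claim of a self-contained derivation from the algebraic bound.

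Second, you flag the identity $\lambda_{\phi_V}=1/A(V)$ as the ``main obstacle'' and propose to cite it from \cite{BFaJ}. That full identity is not actually needed, and the paper does not use it: applying (\ref{eq:bfj}) to $\phi$ itself together with the trivial computation $V(\phi)=1$ from (\ref{eq:reltype}) yields the one-sided bound $\lambda_\phi\ge V(\phi)/A(V)=1/A(V)$ for free. Note your own argument only uses the $\ge$ direction --- you need $\lambda_{\phi_\epsilon}\ge(V(u)-\epsilon)/A(V)$, nothing more --- so you could drop the cited identity entirely and replace it with this cheap consequence of (\ref{eq:bfj}), which would both tighten the logical dependencies and bring your proof in line with the paper's. (Also, calling the multi-circled case of the identity ``exactly Theorem~\ref{prop:lambdau}'' is a slight overreach: the theorem gives the formula for $\lambda_u$, and only a remark afterwards computes $\lambda_{\phi_a}=(\sum_k a_k)^{-1}$ and notes $A(V)=\sum a_k$.) The remaining pieces --- the pointwise exponential comparison giving $\sup_\phi\lambda_\phi\le\lambda_u$, the $\epsilon$-construction of the dominating weight $\phi_\epsilon=(V(u)-\epsilon)\phi_V$ (essentially the paper's $\phi'=V(u)\phi$), and the final Demailly comparison for Monge--Amp\`ere masses --- match the paper.
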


\begin{proof}
Let $V\in {\cal V}_{qm}$ and $\phi\in W_{qm}$ be its representing weight. Since $u\le V(u)\phi+O(1)$ (which follows from (\ref{eq:reltype}) and the maximality of $\phi$), Demailly's Comparison theorem \cite{D1} gives
\beq\label{eq:b1} L_n(u)^{1/n}\ge V(u)L_n(\phi)^{1/n}.\eeq
In view of (\ref{dFEM}), $ \lambda_{\phi_j}\le n^{-1}L_n(\phi_j)^{1/n}$
and, using this together with (\ref{eq:demappr}), we get
$ L_n(\phi)^{1/n}\ge n(1-C/j)\lambda_{\phi}$
for each $j$ and so,
$$ L_n(\phi)^{1/n}\ge n\lambda_{\phi}.$$
By (\ref{eq:bfj}),
$$ \lambda_\phi\ge\frac{V(\phi)}{A(V)}=\frac1{A(V)},$$
so the estimation in (\ref{eq:b1}) continues to
$$L_n(u)^{1/n}\ge  nV(u)\lambda_\phi\ge n\frac{V(u)}{A(V)}.$$
Denote $\phi'=V(u)\phi\in W_{qm}$, then $u\le\phi'+O(1)$ and
$$n^{-1}L_n(u)^{1/n}\ge n^{-1}L_n(\phi')^{1/n}\ge \lambda_{\phi'}\ge\frac{V(u)}{A(V)}.$$
Finally, we use again (\ref{eq:bfj}) and the obvious relation $\lambda_{\phi'}\le \lambda_u$.
\end{proof}

\medskip

{\it Remarks.} 1. Note that (\ref{eq:valbound}) is a stronger relation than (\ref{dem}). For example, any quasi-monomial weight $\phi$ dominating the function $u=\max\{\log|z_1|, \frac14 \log|z_1z_2|,\log|z_2|\}$ is $\phi_a$ with $a=(a_1,a_2)\in A=\{a_1,a_2\ge 1,\ a_1+a_2\le 4\}$. Therefore,
$$\lambda_u=\sup_{a\in A}\lambda_{\phi_a}=\frac12\sup_{a\in A} \sqrt{L_2(\phi_a)}=\frac14<\frac1{2\sqrt2}= \frac12\sqrt{L_2(u)}.$$

2. Using the relation $L_k(u)^{1/k}\ge V(u)L_k(\phi)^{1/k}$ instead of (\ref{eq:b1}), we get a similar refinement of Theorem~\ref{theo:gencasek}:
$$ \lambda_u\le k^{-1}\sup_\phi L_k(\phi)^{1/k}\le k^{-1}L_k(u)^{1/k}.$$

\bigskip
{\small {\bf Acknowledgement.} The author thanks the anonymous referee for valuable suggestions.}

\vskip1cm

Tek/Nat, University of Stavanger, 4036 Stavanger, Norway

\vskip0.1cm

{\sc E-mail}: alexander.rashkovskii@uis.no

\end{document}